\numberwithin{equation}{section}
\let\oldmarginpar\marginpar
\renewcommand\marginpar[1]{\-\oldmarginpar[\raggedleft\footnotesize #1]%
	{\raggedright\footnotesize\color{red} #1}} 
\newtheorem{theorem}{Theorem}[section]
\newtheorem{lemma}[theorem]{Lemma}
\newtheorem{remark}[theorem]{Remark}
\newcommand{\eps}{\varepsilon}
\newtheorem{thm}{Theorem}[section]
\newenvironment{proof}[1][Proof]{\textbf{#1.} }{\hfill\rule{0.5em}{0.5em}}
{\catcode`\@=11\global\let\AddToReset=\@addtoreset
	\AddToReset{equation}{section}

	\AddToReset{theorem}{section}

	\title{Global well-posedness of the 1d compressible Navier–Stokes system with rough data}
	\author{
		{\bf Ke Chen,\thanks{E-mail address: kchen18@fudan.edu.cn, Fudan University, 220 Handan Road, Yangpu, Shanghai, 200433, China.}~~ Ly Kim Ha,\thanks{E-mail address: lkha@hcmus.edu.vn, Address 1: University of Science VNU-HCMC, Ho Chi Minh City 700000, Vietnam, Address 2: Vietnam National University, Ho Chi Minh City, Vietnam}
			~~Ruilin Hu,\thanks {E-mail address: huruilin16@mails.ucas.ac.cn,  Academy of Mathematics and Systems Science, Chinese Academy of Sciences, Beijing, 100190, China.}
			~~Quoc-Hung Nguyen\thanks{E-mail address: qhnguyen@amss.ac.cn, Academy of Mathematics and Systems Science, Chinese Academy of Sciences, Beijing, 100190, China.}}}}
\begin{document}
	\maketitle
	\begin{abstract} 
	In this paper, we study the global well-posedness problem for the 1d compressible Navier–Stokes systems (cNSE) in gas dynamics with rough initial data. First, Liu and Yu (Commun Pure Appl Math 75(2):223–348, 2022) established the global well-posedness theory for the 1d isentropic cNSE with initial velocity data in BV space. Then, it was extended to the 1d cNSE for the polytropic ideal gas with initial velocity and temperature data in BV space by Wang-Yu-Zhang (Arch Rational Mech Anal 245, 375–477,2022). We improve the global well-posedness result of Liu and Yu with initial velocity data in  $W^{2\gamma,1}$ space; and of Wang-Yu-Zhang with initial velocity data in $ L^2\cap W^{2\gamma,1}$ space and initial data of temperature in $\dot W^{-\frac{2}{3},\frac{6}{5}}\cap \dot W^{2\gamma-1,1}$ for any $\gamma>0$ \textit{arbitrary small}. Our essential ideas are based on establishing various ``end-point" smoothing estimates for the 1d parabolic equation. 
	\end{abstract}
	
	\section{Introduction}
	In this paper, we consider the compressible Navier--Stokes equations in Lagrangian coordinates, which can be written as (see \cite{Smoller}) 
	\begin{equation}\label{eqcompre}
		\left\{
		\begin{aligned}
			&	v_t-u_x=0,\\
			&	u_t+p_x=\left(\frac{\mu u_x}{v}\right)_x,\\
			&	(e+\frac{1}{2}u^2)_t+({p}u)_x=\left(\frac{\kappa}{v}\theta_x+\frac{\mu}{v}uu_x\right)_x.
		\end{aligned}
		\right.
	\end{equation}
	Here we denote $v$ the specific volume, $u$ the velocity, $p$ the pressure, $e$ the specific internal energy,  $\theta $ the temperature. And $\mu, \kappa>0$ are   viscosity and heat conductivity coefficients. The above equations describe the conservation of mass, momentum and energy, respectively. The system is hyperbolic-parabolic. It is not uniformly parabolic, but dissipative. Solutions will be singular at finite time with some large initial data. The purpose of this paper is to study the global well-posedness and long time behavior for  the system with initial data closing some constant states. 
	
	There are extensive literatures on the mathematical analyses of cNSE.  The study of cNSE in gas dynamics started by the pioneer work of Nash in \cite{Nash1958}, where he studied the continuity of general elliptic and parabolic equations. Then Itaya \cite{Itaya71,Itaya75}, Tani \cite{Tani}, Valli \cite{valli} proved local well-posedness in the absence of vacuum. The global classical solution was first obtained by Kazhikhov--Shelukhin \cite{Ka77}   for initial boundary value problem with $H^1$ data. The corresponding result for the Cauchy problem can be found in \cite{KaCau}. The global well-posedness of weak solutions was proved by Jiang--Zlotnik \cite{JiangZ}. The asymptotic behavior as $t\to\infty$ of the solution has been studied under
some smallness conditions on the initial data, see \cite{Hoff92,Liu2}. We also refer to \cite{Jiang99,LiLiang} for large time behavior of solutions with large initial data. 
For the the multi-dimensional case, Matsumura and Nishida \cite{MN1980} applied the energy method to derive the existence of a global solution with Sobolev data. The result of large-time behavior of global solution was provided by Kawashima \cite{Kawa87}. Using the energy method, Hoff \cite{Hoff92,Hoff95} proved the existence of a global solution with discontinuous data.

 In the presence of vacuum, Lions \cite{Lions} proved
the global existence of weak solutions to the isentropic cNSE, see also Feireisl \cite{Fe} from result of the full cNSE. Further developments can be found in \cite{Fe2,Jiang}. The global well-posedness of classical solutions was first proved by Huang--Li--Xin\cite{HLX}, their results hold for 3d isentropic system with small initial energy, see also \cite{LiXin2}. For the full cNSE with vacuum, Xin \cite{Xinbl} and Xin–Yan \cite{Xinbl2} proved finite time blow up results for classical solutions. 
We refer interested readers to  \cite{Jiang,LiWX,LWX,LiXin,LiXin2} for more related results.\\\\
	We will consider the following two systems. The first system is the  model of isentropic gas dynamics where $$p(v)=Av^{-\nu},\ A>0,\ 1\leq \nu<\mathrm{e}.$$ In the isentropic case, the temperature is held constant hence energy must be added to the system, whence the conservation 
	of energy is absent. The system can be written as  
	\begin{align}\label{inns}
		\left\{
		\begin{aligned}
			&	v_t-u_x=0,\\
			&	u_t+(p(v))_x=(\frac{\mu u_x}{v})_x,~~ p(v)=Av^{-\nu}, 
			\\
			& (v(0),u(0))=(v_0,u_0).
		\end{aligned}
		\right.
	\end{align}
The second system is \eqref{eqcompre} for the polytropic ideal gas which has the constitutive relations
	$$
	P(v,\theta)=\frac{K\theta}{v},\ \ \ e=\mathbf{c}\theta,
	$$
	where $K$ and $\mathbf{c}$ are both positive constants. The system can be written as 
	\begin{equation}\label{cpns}
		\left\{
		\begin{aligned}
			&	v_t-u_x=0,\\
			&	u_t+(P(v,\theta))_x=\left(\frac{\mu u_x}{v}\right)_x,\\
			&	\theta_t+\frac{P(v,\theta)}{\mathbf{c}}u_x-\frac{\mu}{\mathbf{c}v}(u_x)^2=\left(\frac{\kappa}{\mathbf{c}v}\theta_x\right)_x,\\
			& (v(0),u(0),\theta(0))=(v_0,u_0,\theta_0).
		\end{aligned}
		\right.
	\end{equation}
	Recently, Liu--Yu \cite{LiuCAPM} proved global well-posedness of weak solutions for  system \eqref{inns} with $BV\cap L^1$ data. More precisely, they proved that for initial data $(v_0,u_0)$ satisfying 
\begin{align}\label{theircon}
	\|(v_0-1,u_0)\|_{L^1\cap BV}\leq \delta,
\end{align}
	for sufficiently small $\delta$, the solution
exists global-in-time and, for some positive constant $C$,
\begin{align*}
&\|(v(t)-1,u(t))\|_{L^1\cap BV}+\sqrt{t+1}\|(v(t)-1,u(t))\|_{L^\infty}+\sqrt{t}\|u_x(t)\|_{L^\infty}\leq C\delta,\ \ \ \ \forall t>0.
\end{align*}
	This result is based on the construction of the fundamental solution to the heat equation with $BV$ coefficient,
	\begin{align*}
		&\partial_t \mathbf{K}(t,x,y)-\partial_{x}(\mu(x)\partial_{x}\mathbf{K}(t,x,y))=0,\\
		&\lim_{t\to 0^+}\mathbf{K}(t,x,y)=\mathbf{Dirac}(x-y),
	\end{align*}
for any $x, y\in \mathbb{R}$, $x\not= y$ where $\mu(x)$ is a $BV$ function with the property $
	\inf _x \mu(x)\geq c>0$ and $||\mu-1||_{BV}\ll 1.$\vspace{0.1cm}\\
Later on, the result  was extended to the full compressible Navier--Stokes system \eqref{cpns} by  Wang--Yu--Zhang \cite{Wang,Wang1}. They proved the global well-posedness for initial data $(v_0,u_0,\theta_0)$ satisfying 
\begin{align}\label{theircon2}
	\|(v_0-1,u_0,\theta_0-1)\|_{L^1\cap BV}\leq \delta,
\end{align}
		for sufficiently small $\delta$, the solution
exists global-in-time and satisfies 
\begin{align*}
	\|(v(t)-1,u(t),\theta(t)-1)\|_{L^1\cap BV}&+\sqrt{t+1}\|(v(t)-1,u(t),\theta(t)-1)\|_{L^\infty}\\
&\quad\quad\quad\quad\quad\quad\quad\quad\quad+\sqrt{t}\|(u_x(t),\theta_x(t))\|_{L^\infty}\leq C\delta,\ \ \ \  \forall t>0.
\end{align*}
	In the  more recently work \cite{CHN}, based on some smoothing and Lipschitz-type estimates,  the authors proved local  well-posedness of both system \eqref{inns} and \eqref{cpns} with very rough data, which allows $v_0$ to have finite jumps.  A simple example of $v_0$ is given for readers' convenience: Consider $v_0$ that is ``almost continuous" in $(-\infty,0)\cup(0,+\infty)$, with the jump $|v_0(0^+)-v_0(0^-)|$ that can be arbitrarily large. More precisely, ``almost continuous" means that there exists $\varepsilon>0$ such that $|v_0(x)-v_0(y)|\leq b$ for any $x,y$ satisfying $xy>0, |x-y|\leq \varepsilon$, where  $b$ is a small constant.  Let  us mention the local results briefly. For simplicity,  we denote $\|\cdot\|_{L^p_{T,x}}$ the $L^p$ norm in $[0,T]\times\mathbb{R}$, and $\|\cdot\|_{L^p}$ the $L^p$ norm in $\mathbb{R}$.
	For  initial data $(v_0,u_0)$ satisfying 
	\begin{align*}
	   v_0\in L^\infty,\quad \quad \inf_{x}v_0\geq \lambda_0>0,\quad\quad \|u_0\|_{\dot B^{-1+2\gamma}_{\infty,\infty}}< \infty,
	\end{align*}
the system \eqref{inns} admits a unique local solution  $(v,u)$ in $[0,T]$ satisfying
		\begin{align*}			&\inf_{t\in[0,T]}\inf_xv(t,x)\geq \frac{\lambda_0}{2},\ \ \ \ \sup_{t\in[0,T]}(\|v(t)\|_{L^\infty}+ t^{1-\gamma}\|u_x(t)\|_{L^\infty})
		<\infty.	
		\end{align*}
	Here $0< \gamma \ll 1$ is fixed constant that can be taken arbitrarily small.\\
	For the full compressible Navier--Stokes system \eqref{cpns}, we consider initial data $(v_0,u_0,\theta_0)$ that  satisfies
	\begin{align}\label{idcns}
		\inf_xv_0(x)\geq \lambda_0>0,\ \ \	\|v_0\|_{L^\infty}<\infty,\ \ \|u_0\|_{L^2}<\infty,\ \ \|\theta_0\|_{\dot W^{-\frac{2}{3},\frac{6}{5}}}<\infty.
	\end{align}
	There exists a unique local solution $(v,u,\theta)$ in $[0,T]$ satisfying 
	\begin{align*}
	 \inf_{t\in[0,T]}\inf_xv(t,x)\geq \frac{\lambda_0}{2},\ \ \  \|v\|_{L^\infty_{T,x}}+ \|u_x\|_{L^2_{T,x}}+\|\theta\|_{L^2_{T,x}}+\sup_{t\in[0,T]} t^{\frac{3}{4}}\|u_x(t) \|_{L^\infty}<\infty.
	\end{align*}
Here we define, for any $\beta\in (0,1),p\in [1,\infty)$, 
\begin{align}\label{defneg}
    ||g||_{\dot W^{-\beta,p}}:=\inf\{ ||f||_{\dot W^{1-\beta,p}}: g=\partial_x f\},
\end{align}
where 
\begin{align*}
    ||f||_{\dot W^{1-\beta,p}}:=\left(\iint_{\mathbb{R}^2}\frac{|f(x)-f(y)|^p}{|x-y|^{1+(1-\beta)p}}dxdy\right)^\frac{1}{p}.
\end{align*}
Note that $\dot W^{-\beta,p}$ does not coincide with the dual space of $\dot W^{\beta,\frac{p}{p-1}}$ for any $p\not =2$.

	In this paper, we improve the global well-posedness result of Liu and Yu with initial velocity data in  $W^{2\gamma,1}$ space; and of Wang-Yu-Zhang with initial velocity data in $ L^2\cap W^{2\gamma,1}$ space and initial data of temperature in $\dot W^{-\frac{2}{3},\frac{6}{5}}\cap \dot W^{2\gamma-1,1}$ for any $\gamma>0$ that can be \textit{arbitrarily small}. We state our main results as follows.
	\begin{thm}\label{thm1} Let $\gamma\in (0,\frac{1}{100}].$
	There exists $\varepsilon_0>0$ such that any initial data $(v_0,u_0)$ satisfying 
	\begin{align}\label{glocon1}
	 M_1:=  \|v_0-1\|_{L^1\cap BV}+\|u_0\|_{W^{2\gamma,1}}\leq\varepsilon_0, 
	\end{align}
	the system \eqref{inns} admits a unique global  solution $(v,u)$  such that  
\begin{align}\label{esthm1}
&\|(v(t)-1,u(t))\|_{L^1\cap BV}+\sqrt{t+1}\|(v(t)-1,u(t))\|_{L^\infty}+\sqrt{t}\|u_x(t)\|_{L^\infty}\lesssim  (1+\frac{1}{t})M_1,\ \ \ \ \forall t>0.
\end{align}

	\end{thm}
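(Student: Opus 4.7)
The plan is a two-scale bootstrapping argument: on a short initial interval we use the rough-data local theory of \cite{CHN} together with an end-point parabolic smoothing estimate to upgrade the velocity from $W^{2\gamma,1}$ to $L^1 \cap BV$; on the long-time interval the data then fit into the regime of \cite{LiuCAPM} and we invoke their global theorem. Since $\gamma > 0$, the initial datum enjoys a tiny amount of extra regularity compared with the endpoint local theory of \cite{CHN}, and this is precisely what allows the smoothing to reach $BV$ by a positive time $t_0$, with norms controlled by $M_1$.

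\textbf{Short-time existence and smoothing.} The Besov embedding $W^{2\gamma,1}(\mathbb{R}) \hookrightarrow \dot B^{2\gamma - 1}_{\infty,\infty}(\mathbb{R})$ together with $\inf v_0 \geq \lambda_0 > 0$ puts the data in the framework of \cite{CHN}, furnishing a unique local solution $(v,u)$ on some $[0, T_\ast]$ with $v \geq \lambda_0/2$ and $\sup_{[0,T_\ast]} t^{1-\gamma}\|u_x(t)\|_{L^\infty} < \infty$. I would then rewrite the momentum equation as the variable-coefficient heat equation
\begin{equation*}
u_t - \partial_x\!\bigl(\tfrac{\mu}{v}\, u_x\bigr) = -\partial_x p(v),
\end{equation*}
and use the Duhamel formula with the fundamental solution $\mathbf{K}(t,x,y)$ of \cite{LiuCAPM}. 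The key new estimate to establish is an end-point smoothing bound
\begin{equation*}
\Bigl\|\int_{\mathbb{R}} \mathbf{K}(t,\cdot,y)\, f(y)\, dy\Bigr\|_{L^1 \cap BV} \lesssim t^{-\frac{1}{2}+\gamma}\, \|f\|_{W^{2\gamma,1}},
\end{equation*}
with constant depending only on $\mu$ and the sup/inf bounds of the coefficient. Combining this with the continuity equation $v_t = u_x$, which propagates $\|v(t)-1\|_{L^1 \cap BV}$ by integration in time, and running a Gronwall closure on a short interval $[0, t_0]$ should yield
\begin{equation*}
\|v(t_0) - 1\|_{L^1\cap BV} + \|u(t_0)\|_{L^1\cap BV} \leq C M_1.
\end{equation*}

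\textbf{Global extension and gluing.} Choosing $\varepsilon_0$ so small that $C\varepsilon_0$ lies below the Liu--Yu threshold $\delta$ from \eqref{theircon}, I would restart the system at $t_0$ with data $(v(t_0),u(t_0))$ and invoke \cite{LiuCAPM} to extend the solution to $[t_0,\infty)$ with the Liu--Yu decay estimates. Uniqueness in the rough class of \cite{CHN} ensures agreement on the overlap $[0,t_0]$. For $t \in (0,t_0]$ the bound \eqref{esthm1} follows from the smoothing estimates of the previous step, using $t^{-\frac{1}{2}+\gamma} \lesssim 1 + 1/t$ and $t^{-1+\gamma}\cdot\sqrt{t} \lesssim 1/t$ to absorb the short-time singular weights into the factor $(1+1/t)$; for $t \geq t_0$ that factor is bounded and the Liu--Yu bounds give \eqref{esthm1} directly.

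\textbf{Main obstacle.} The heart of the argument is the end-point parabolic smoothing above: pushing from the very weak initial regularity $W^{2\gamma,1}$ (with $\gamma$ arbitrarily small) up to $L^1 \cap BV$ regularity, uniformly with respect to a coefficient $\mu/v$ that is itself only $BV$ and close to a constant. The kernel estimates of \cite{LiuCAPM} are tailored to $L^1\cap BV \to L^1\cap BV$ mapping properties; to treat genuinely weaker data one must revisit $\mathbf{K}$ and $\partial_y \mathbf{K}$, obtain sharp fractional smoothing bounds with the right time weight $t^{-\frac{1}{2}+\gamma}$, and then close the coupled nonlinear system without losing smallness, since the pressure source $\partial_x p(v)$ carries the same $BV$ modulus as $v$ itself.
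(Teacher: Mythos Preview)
Your overall strategy---short-time parabolic smoothing to reach $L^1\cap BV$, then restart and invoke Liu--Yu---is exactly the paper's. The implementation of the short-time step, however, is genuinely different, and your version has a gap.

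You propose to run Duhamel with the Liu--Yu fundamental solution $\mathbf{K}(t,x,y)$ for the operator $\partial_t-\partial_x(\tfrac{\mu}{v}\partial_x\cdot)$ and to prove a $W^{2\gamma,1}\to L^1\cap BV$ smoothing bound for it. Two problems: first, the Liu--Yu kernel is constructed for a \emph{time-independent} $BV$ coefficient $\mu(x)$, whereas here the coefficient $\mu/v(t,x)$ is time-dependent (it is part of the unknown), so that kernel is not directly available. Second, and more seriously, the paper explains in the introduction that the naive closure you sketch---controlling $\sup_t t^{1-\gamma}\|u_x(t)\|_{BV}$ by $\|u_0\|_{W^{2\gamma,1}}$ plus the same weighted norm of the forcing---\emph{fails}: the estimate
\[
\sup_{0<s<T}s^{1-\gamma}\|f_x(s)\|_{BV}\ \lesssim\ \|f_0\|_{W^{2\gamma,1}}+\sup_{0<s<T}s^{1-\gamma}\|F(s)\|_{BV}
\]
is simply not true for solutions of $f_t-f_{xx}=\partial_xF$. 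A Gronwall argument in the pointwise-in-time weighted norm therefore cannot close, regardless of which heat kernel you use.

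The paper's fix is not to sharpen kernel bounds for the variable-coefficient operator, but to work perturbatively around the \emph{constant-coefficient} heat equation, $u_t-\mu u_{xx}=\partial_x\tilde F$ with $\tilde F=-(p(v)-p(1))+\mu(\tfrac{1}{v}-1)u_x$, and to replace the naive weighted norm by the augmented norm
\[
\|h\|_{BV_T^{\sigma}}=\sup_{0<s<T}s^{\sigma}\|h(s)\|_{BV}+\sup_{0<s<t<T}s^{\sigma+\alpha}\frac{\|h(t)-h(s)\|_{BV}}{(t-s)^{\alpha}}.
\]
The added $C^\alpha_t$ piece is what makes the Schauder-type estimate (Lemma~2.7, specifically \eqref{BV}) valid; and the reason one can afford to carry that piece through the nonlinearity is that $v$ inherits H\"older-in-time regularity from $v_t=u_x$. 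This is the missing idea in your proposal: the closure mechanism is a time-H\"older commutator structure, not a refinement of the spatial mapping properties of $\mathbf{K}$.
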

		\begin{thm}\label{thm2} Let $\gamma\in (0,\frac{1}{100}].$
			There exists $\varepsilon_1>0$ such that any initial data $(v_0,u_0,\theta_0)$ satisfying 
	\begin{align}\label{glocon2}
	   M_2:=\|v_0-1\|_{L^1\cap BV}+\|u_0\|_{W^{2\gamma,1}\cap L^2}+\|\theta_0-1\|_{\dot W^{-\frac{2}{3},\frac{6}{5}}\cap \dot W^{2\gamma-1,1}}\leq\varepsilon_1,
	\end{align}
	the system \eqref{inns} admits a unique global  solution $(v,u,\theta)$ such that  for $U(t)=(v(t)-1,u(t),\theta(t)-1)$
	\begin{equation}\label{main2}
	    \begin{aligned}
	        &	\|U(t)\|_{L^1\cap BV}+\sqrt{t+1}\|U(t)\|_{L^\infty}+\sqrt{t}\|(u_x(t),\theta_x(t))\|_{L^\infty}\lesssim (1+\frac{1}{t})M_2,\ \ \ \  \forall t>0.
	    \end{aligned}
	\end{equation}
\end{thm}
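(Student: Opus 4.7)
The strategy is to bridge the local rough-data theory of \cite{CHN} with the global $BV$ theory of Wang--Yu--Zhang \cite{Wang,Wang1} via an instantaneous parabolic regularization. I would split the time axis as $(0,\infty)=(0,t_\ast]\cup[t_\ast,\infty)$ for a small $t_\ast=t_\ast(\varepsilon_1)>0$ to be fixed below. On $(0,t_\ast]$ the local result of \cite{CHN} under condition \eqref{idcns} (which is weaker than \eqref{glocon2}) provides a solution with $\inf v\ge \lambda_0/2$, $\|v\|_{L^\infty_{t_\ast,x}}+\|u_x\|_{L^2_{t_\ast,x}}+\|\theta\|_{L^2_{t_\ast,x}}+\sup_{t}t^{3/4}\|u_x(t)\|_{L^\infty}$ finite. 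End-point smoothing will then upgrade this to $L^1\cap BV$ at $t=t_\ast$; on $[t_\ast,\infty)$ one applies Wang--Yu--Zhang directly.

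The heart of the short-time phase is a family of end-point smoothing estimates for the linearized parabolic equation with $BV$ coefficient
\begin{align*}
    \partial_t f-\partial_x\!\left(\tfrac{\mu}{v}\partial_x f\right)=\partial_x G+F,
\end{align*}
of the form
\begin{align*}
    \|f(t)\|_{BV}\lesssim t^{-(1/2-\gamma)}\|f(0)\|_{W^{2\gamma,1}}+\int_0^t (t-s)^{-1/2}\|G(s)\|_{L^1}ds+\int_0^t\|F(s)\|_{L^1}ds,
\end{align*}
together with the analogous estimate starting from $\dot W^{2\gamma-1,1}$ (costing a factor $t^{-(1-\gamma)}$), and the mixed $\dot W^{-2/3,6/5}\to L^2$ smoothing needed for the CHN-type bound $\|\theta\|_{L^2_{t_\ast,x}}<\infty$. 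These follow from the Liu--Yu fundamental solution $\mathbf K(t,x,y)$, but one must rework its kernel estimates to track negative-index and low-$L^p$ endpoints rather than $BV$ data. Plugging $u$ and $\theta-1$ into these bounds through the Duhamel formula, and using $\|v-1\|_{BV}$ (propagated from $v_t=u_x$ via the $L^1_tBV_x$ bound for $u_x$) to estimate the nonlinearities $p_x=K(\theta/v)_x$, $u_x^2/v$ and $(P(v,\theta)u_x)/\mathbf c$, the $L^2_{T,x}$ control of $u_x$ absorbing the quadratic coupling, a standard bootstrap closed by the smallness of $\varepsilon_1$ yields
\begin{align*}
    \|U(t)\|_{L^1\cap BV}+\sqrt{t+1}\|U(t)\|_{L^\infty}+\sqrt{t}\|(u_x,\theta_x)\|_{L^\infty}\lesssim (1+t^{-1})M_2,\qquad t\in(0,t_\ast].
\end{align*}

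Once $\|U(t_\ast)\|_{L^1\cap BV}\le C\varepsilon_1$ is achieved, choosing $\varepsilon_1$ smaller than the Wang--Yu--Zhang threshold $\delta$ in \eqref{theircon2} allows their global theorem to extend the solution to $[t_\ast,\infty)$ with $\|U(t)\|_{L^1\cap BV}+\sqrt{t+1-t_\ast}\|U(t)\|_{L^\infty}+\sqrt{t-t_\ast}\|(u_x,\theta_x)\|_{L^\infty}\lesssim \varepsilon_1$. Since $t_\ast$ is fixed, $\sqrt{t+1-t_\ast}\asymp\sqrt{t+1}$ and $\sqrt{t-t_\ast}\asymp\sqrt{t}$ for $t\ge 2t_\ast$, and a direct bound covers $t\in[t_\ast,2t_\ast]$; this recovers \eqref{main2} on $[t_\ast,\infty)$, and combined with the short-time estimate above gives \eqref{main2} globally. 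Uniqueness is inherited piecewise from \cite{CHN} on $(0,t_\ast]$ and from Wang--Yu--Zhang on $[t_\ast,\infty)$.

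\emph{Main obstacle.} The delicate point is the temperature equation: $\theta_0-1$ lives in two different \emph{negative}-order spaces, while the Liu--Yu kernel was tailored to $BV$ data. The mixed-norm smoothing $\dot W^{-2/3,6/5}\to L^2$ must survive the $BV$-coefficient $1/v$, and the end-point $\dot W^{2\gamma-1,1}\to BV$ smoothing (loss $t^{-(1-\gamma)}$) has to absorb the quadratic source $u_x^2/v$ and the transport $(P(v,\theta)/\mathbf c)u_x$ simultaneously. Closing the coupled bootstrap that propagates $\|v-1\|_{BV}$, $\|u(t)\|_{BV}$, $\|\theta(t)-1\|_{BV}$ and the weighted $L^\infty$ norms against these end-point losses, while keeping the right-hand sides strictly subcritical in $\varepsilon_1$, is the principal technical difficulty of the argument.
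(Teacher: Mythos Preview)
Your global strategy---run a short-time solution, show instantaneous regularization to $L^1\cap BV$, then invoke Wang--Yu--Zhang from a positive time---is exactly the paper's. The technical execution of the short-time phase, however, is genuinely different, and your version carries a gap.

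You propose to obtain the endpoint smoothing directly from the Liu--Yu variable-coefficient fundamental solution $\mathbf K(t,x,y)$, ``reworking its kernel estimates to track negative-index and low-$L^p$ endpoints.'' The paper does \emph{not} do this. Instead it treats the $BV$ coefficient perturbatively: write $\partial_t f-\partial_x^2 f=\partial_x F$ with $F=(\tfrac{1}{v}-1)\partial_x f$ and use the \emph{standard} heat kernel. The crucial observation is that the naive endpoint estimate
\[
\sup_{0<s<T}s^{1-\gamma}\|f_x(s)\|_{BV}\lesssim \|f_0\|_{W^{2\gamma,1}}+\sup_{0<s<T}s^{1-\gamma}\|F(s)\|_{BV}
\]
is \emph{false}; the Duhamel integral $\int_0^t\partial_t\mathbf K(t-\tau)\ast F(\tau)\,d\tau$ does not map $\sup_s s^\sigma\|\cdot\|_{BV}$ to itself. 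The paper repairs this by building time-H\"older regularity into the norm,
\[
\|h\|_{BV_T^\sigma}=\sup_{0<s<T}s^\sigma\|h(s)\|_{BV}+\sup_{0<s<t<T}s^{\sigma+\alpha}\frac{\|h(t)-h(s)\|_{BV}}{(t-s)^\alpha},
\]
and proving $\|f_x\|_{BV_T^{1-\gamma}}\lesssim\|f_0\|_{W^{2\gamma,1}}+\|F\|_{BV_T^{1-\gamma}}$. The extra H\"older piece is exactly what makes the Schauder-type bound close, and it is available because $v-1=\int_0^t u_x$ inherits $C^\alpha_t$ regularity from $u_x\in L^1_t$. This is the missing idea in your proposal: without it, your bootstrap (``a standard bootstrap closed by the smallness of $\varepsilon_1$'') will not close at the $BV$ endpoint, and pushing the burden onto hypothetical Liu--Yu kernel estimates in $\dot W^{-2/3,6/5}$ and $\dot W^{2\gamma-1,1}$ is a substantial unproven claim rather than a technicality.

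A secondary point: you invoke the CHN local result under \eqref{idcns} as a black box, but that result only gives $\sup_t t^{3/4}\|u_x\|_{L^\infty}$ and $\theta\in L^2_{T,x}$, not the $\|u_x\|_{L^1_tBV_x}$ and $\|\theta_x\|_{L^1_tL^1_x}$ bounds you need to propagate $\|v-1\|_{BV}$ and reach $\|U(t_\ast)\|_{BV}$. The paper therefore proves a \emph{new} local theorem (its Theorem~\ref{thmfull}) in the tailored spaces $Z_T^1,Z_T^2$ built on the $X_T^{\sigma,p}$ and $BV_T^\sigma$ norms, via a fresh fixed-point argument; it does not reuse the CHN solution. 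Your outline would need the same upgrade.
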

The idea to prove Theorem \ref{thm1} and \ref{thm2} is from \cite{Peskin}. The key point is to  shows that the solution becomes more regular in a short time $T$. Then, start from time $T$, we can apply results in Liu--Yu \cite{LiuCAPM} and  Wang--Yu--Zhang \cite{Wang,Wang1} to obtain global existence.\vspace{0.1cm}\\
	We make some remarks about the setting of initial data. We note that $\|v_0-1\|_{L^1}+\|v_0\|_{BV}\leq\varepsilon$ implies that $\|v_0-1\|_{L^\infty}\leq \varepsilon$, this provides a positive lower bound of $v_0$. Compare with previous global results (see \cite{LiuCAPM,Wang1}), we relax the regularity of initial data $u_0$ in \eqref{inns} and $(u_0,\theta_0)$ in \eqref{cpns}, respectively. More precisely, for system \eqref{inns}, we relax the condition $u_0\in BV$ in \eqref{theircon} to $u_0\in W^{2\gamma,1}$, which improved almost 1 derivative in view of the fact that $BV$ has the same scaling with $\dot W^{1,1}$. We note that $\dot W^{2\gamma,1}$ has the same scaling with $\dot B^{-1+2\gamma}_{\infty,\infty}$, this shows the consistency of our global result and local result. Furthermore, we take the inhomogeneous Sobolev norm $W^{2\gamma,1}$ that also contains $L^1$ norm, which helps us to control lower frequencies. Moreover, for system \eqref{cpns}, we relax the condition $u_0, \theta_0-1\in BV\cap L^1$ in \eqref{theircon2} to $u_0\in L^2\cap W^{2\gamma,1}, \theta_0-1\in \dot W^{-\frac{2}{3},\frac{6}{5}}\cap \dot W^{2\gamma-1,1}$. Recall that the initial condition for local well-posedness is $u_0\in L^2, \theta_0\in \dot W^{-\frac{2}{3},\frac{6}{5}}$. By Sobolev embedding, we have $\dot W^{-\frac{2}{3},\frac{6}{5}}\subset\dot W^{2\gamma-1,\frac{2}{1+2\gamma}}$, which indicates that the new constraint $\theta_0-1\in \dot W^{2\gamma-1,1}$ is a lower order control. 
	
	The improvements are done because of the smoothing effect of parabolic equations. However, the hyperbolic nature of $v$ will not generate any smoothing effect. Hence, by known results, it is natural to set $v_0$ with $\|v_0-1\|_{L^1}+\|v_0\|_{BV}\ll 1$. 
	To propogate $\|v(t)\|_{BV}$, it requires $\|u_x\|_{L^1_t BV_x}$ bounded. So, by the parabolic nature of $u$, $u_0$ should satisfy $$\int_{0}^{1}||\partial_x\left( e^{t\partial_{x}^2}u_0\right)||_{BV}dt<\infty.$$
	This motivates us to take $u_0\in W^{2\gamma,1}$.
	 For the full N--S system \eqref{cpns}, we keep the condition  $u_0\in L^2$ and $\theta_0-1\in \dot W^{-\frac{2}{3},\frac{6}{5}}$ from the  local result (see \eqref{idcns}). Due to the similar structure with \eqref{inns}, we also need $\|u_x\|_{L^1_t BV_x}$. A key observation is that, $\theta$ is expected to have the same regularity with $u_x$. This can be seen from the equation of $u$ in \eqref{cpns}, where we have the term $\partial_x\left(\frac{K\theta}{v}-\frac{\mu u_x}{v}\right)$. Roughly speaking, $\theta$ and $u_x$ play the same role in system  \eqref{cpns}. Hence we need $\|\theta\|_{L^1_tBV_x}$. Moreover, due to the parabolic nature of the equation of $\theta$ in \eqref{cpns}, as above, 
	we impose the condition   $\|\theta_0-1\|_{\dot W^{2\gamma-1,1}}$, which helps us to obtain $\|\theta\|_{L^1_tBV_x}$.
	\vspace{0.2cm}\\
The main difficulty of our paper is to study regularity theory for the parabolic equation  with $BV$ coefficient: 
\begin{equation}\label{toymodel}
		\begin{aligned}
			&\partial_t f(t,x) -\partial_x(A(t,x)\partial_xf(t,x))=0,\\
			&f(0,x)=f_0(x).
		\end{aligned}	
	\end{equation}
	where $A(t,x)=\frac{1}{v(t,x)}$ satisfies  $||A-1||_{L^\infty_T BV_x}<<1$ for some $T>0$ small.\\
	We want to have the regularity $$f_x\in L^1_T BV_x.$$ By the standard argument of regularity theory, one can prove that if $||A||_{L^\infty_t W^{1+\kappa_1,1}}<\infty,$ for some $\kappa_1>0$ one has 
	\begin{align}\label{LBV}
	  \int_0^T ||f_x(t)||_{ BV_x}dt\lesssim ||f_0||_{W^{\kappa,1}},
	\end{align}
	for any $0<\kappa<\kappa_1$ and $T>0$ small.  \\
	However, we would like to prove \eqref{LBV} with $\kappa_1=0$. It is an end-point estimate. Fortunately, we can have H\"older regularity of $v(t,x)$ in time since $v(t,x)=u_0+\int_0^t u_x(\tau,x)d\tau$. This allows us to prove \eqref{LBV} with $||A||_{C^{\alpha}_t BV_x}<\infty$ for some $\alpha\in (0,\frac{1}{16})$. Indeed, one can write  
	\begin{equation}\label{toymodel'}
		\begin{aligned}
			&\partial_t f(t,x) -\partial_x^2f(t,x))=\partial_x F,~~F=(A-1)\partial_xf,\\
			&f(0,x)=f_0(x).
		\end{aligned}	
	\end{equation}
	Now we introduce a new norm: 
	\begin{equation}\label{zzz}
	   \|h\|_{BV_{T}^{\sigma}}=\sup_{0<s<T}s^{\sigma}\|f(s)\|_{BV}+\sup_{0<s<t<T}s^{\sigma+\alpha}\frac{\|f(t)-f(s)\|_{BV}}{(t-s)^\alpha}.
	\end{equation}
	We can prove that for any $\kappa\in (0,1)$
	\begin{equation*}
	   ||f_x||_{BV_{T}^{1-\frac{\kappa}{2}}}\lesssim  ||f_0||_{W^{\kappa,1}}+ ||F||_{BV_{T}^{1-\frac{\kappa}{2}}},
	\end{equation*}
	see \eqref{BV} in Lemma \ref{lemma}. Note that we can not drop the second norm in \eqref{zzz} since  the following estimate 
	\begin{equation*}
	   \sup_{0<s<T} s^{1-\frac{\kappa}{2}}||f_x(s)||_{BV}\lesssim  ||f_0||_{W^{\kappa,1}}+  \sup_{0<s<T} s^{1-\frac{\kappa}{2}}||F(s)||_{BV}
	\end{equation*}
	 is not available. \vspace{0.1cm}\\
	By  $||A||_{C^{\alpha}_t BV_x}<\infty$  and letting $T$ small enough, $||F||_{BV_{T}^{1-\frac{\kappa}{2}}}$ can be absorbed by $||f_x||_{BV_{T}^{1-\frac{\kappa}{2}}}$ in the left hand-side, so one gets
	\begin{equation*}
	   ||f_x||_{BV_{T}^{1-\frac{\kappa}{2}}}\lesssim  ||f_0||_{W^{\kappa,1}},
	\end{equation*}
 This implies \eqref{LBV}. \vspace{0.2cm}\\
The rest of this paper is organized as follows. We establish the main Lemma \ref{lemma} in Section \ref{secmain}. In Section 3 we apply Lemma \ref{lemma} to obtain global well-posedness for system (\ref{inns}) and prove Theorem \ref{thm1}. Section 4 is devoted to prove global well-posedness for  system (\ref{cpns}) and prove Theorem \ref{thm2}. 
	
	\section{Main lemma and its proof}\label{secmain}
	We first introduce some notations in the proof. 	The $BV$ norm is defined as 
$$		\|h\|_{BV}:=\sup_{P\in\mathcal{P}}\sum _{i\in\mathbb{Z}}|h(x_{i+1})-h(x_i)|,$$where the supremum is taken over the set of partition  $\mathcal{P}=\{P=\{x_i\}_{i\in\mathbb{Z}}: x_i\leq x_{i+1}\}$ of $\mathbb{R}$. 	By the definition of $BV$ norm, it is easy to check that for any function $h_1\in L^1(\mathbb{R}), h_2\in BV(\mathbb{R})$, $f\in\dot W^{1,1}$,
\begin{equation}\label{bvcon}
	\|h_1\ast h_2\|_{BV}\lesssim \|h_1\|_{L^1}\|h_2\|_{BV},
\end{equation}
and
\begin{equation}\label{embed}
		\|f\|_{BV}\leq\|f_x\|_{L^1}.
	\end{equation}
For $b\in(0,1)$,	we denote the fractional Laplacian operator $\Lambda^b=(-\Delta)^\frac{b}{2}$, which is the Fourier multiplier with symbol $|\xi|^{b}$. And denote $\Lambda^{-b}$ the Fourier multiplier with symbol $|\xi|^{-b}$. By Plancherel theorem, we have 
\begin{align}\label{move}
\int_{\mathbb{R}}f(x)g(x)dx=\int_{\mathbb{R}}\Lambda^bf(x)\Lambda^{-b}g(x)dx.
\end{align}
	
		Throughout the paper, we fix two constants $0<\alpha<\gamma\ll1$. For $\sigma\geq 0$, $f:(0,T)\times \mathbb{R}\to \mathbb{R}$, define the norms
	\begin{align*}
		&	\|f\|_{X_T^{\sigma,p}}=\sup_{0<s<T}s^{\sigma}\|f(s)\|_{L^p}+\sup_{0<s<t<T}s^{\sigma+\alpha}\frac{\|f(t)-f(s)\|_{L^p}}{(t-s)^\alpha},\\
		&\|f\|_{BV_{T}^{\sigma}}=\sup_{0<s<T}s^{\sigma}\|f(s)\|_{BV}+\sup_{0<s<t<T}s^{\sigma+\alpha}\frac{\|f(t)-f(s)\|_{BV}}{(t-s)^\alpha}.
	\end{align*}
 Without confusion of notations, we denote $\|\cdot\|_{L^p_{T,x}}$ the $L^p$ norm in $[0,T]\times\mathbb{R}$, and $\|\cdot\|_{L^p}$ the $L^p$ norm in $\mathbb{R}$. Moreover, we write $A\lesssim B$ if there exists a universal constant $C$ such that $A\leq CB$. 
	We will solve the system (\ref{inns}) and \eqref{cpns} by the fixed point theorem in the Banach space equipped with the norms above. We note that the norms $\|\cdot\|_{X_T^{\sigma,p}}$ and $\|\cdot\|_{BV^\sigma_T}$ contain  H\"{o}lder derivatives in time.\\
	In this section, we consider the Cauchy problem of the parabolic equation:
	\begin{equation}\label{eqpara}
		\begin{aligned}
			&\partial_t f(t,x) -\partial_x^2f(t,x)=\partial_x F(t,x)+R(t,x),\\
			&f(0,x)=f_0(x).
		\end{aligned}	
	\end{equation}
The solution has formula 
\begin{align}
	f(t,x)&=\int_{\mathbb{R}}\mathbf{K}(t,x-y) f_0(y)dy+\int_0^t \int_{\mathbb{R}}\mathbf{K}(t-\tau,x-y)\partial_x F(\tau,y)dyd\tau\nonumber\\
	&\quad\quad\quad \ \ \ + \int_0^t \int_{\mathbb{R}}\mathbf{K}(t-\tau,x-y)R(\tau,y)dyd\tau\nonumber\\&:=f_L(t,x)+f_N(t,x)+f_R(t,x),\label{forso}
\end{align}
	where 
 $$\mathbf{K}(t,x)=(4\pi t)^{-\frac{1}{2}}e^{-\frac{x^2}{4t}}$$  is the standard heat kernel in $\mathbb{R}$.
	The following estimates for  heat kernel will be used frequently in our proof.
	\begin{lemma}\label{lemheat}
		There holds 
		\begin{align*}
			&|\partial_t^j\partial_x^m \mathbf{K}(t,x)|\lesssim \frac{1}{(t^\frac{1}{2}+|x|)^{1+2j+m}},\\ &\|\partial_t^j\partial_x^m\mathbf{K}(t,\cdot)\|_{L^p}\lesssim t^{-j+\frac{1}{2}(\frac{1}{p}-1-m)},\ \ \ \ \|\partial_x^l\Lambda^{-\sigma}\mathbf{K}(t,\cdot)\|_{L^p}\lesssim t^{\frac{1}{2}(\frac{1}{p}-1-l+\sigma)},\\
			&\| \partial_x^m\mathbf{K}(t+a,\cdot)-\partial_x^m \mathbf{K}(t,\cdot)\|_{L^p}\lesssim \frac{1}{t^{{\frac{1}{2}(m-\frac{1}{p}+1)}}}\min\left\{1,\frac{a}{t}\right\},
		\end{align*}
		for any $m,j=0,1,2$, $l=1,2$, any $\sigma\in(0,1),a,t\geq 0$ and $p\in[1,+\infty]$.
	\end{lemma}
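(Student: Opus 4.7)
My plan is to derive all three bounds from the explicit Gaussian form of $\mathbf{K}$ combined with parabolic scaling, using the heat equation identity $\partial_t \mathbf{K} = \partial_x^2 \mathbf{K}$ to reduce time derivatives to spatial derivatives whenever convenient. For the pointwise bound I would write
\[
\partial_t^j \partial_x^m \mathbf{K}(t,x) = \partial_x^{2j+m}\mathbf{K}(t,x) = t^{-(1+2j+m)/2}\, P_{j,m}\!\bigl(x/\sqrt{t}\bigr)\, e^{-x^2/(4t)},
\]
for a polynomial $P_{j,m}$ produced by iterated differentiation of the Gaussian. In the near regime $|x|\leq t^{1/2}$ the prefactor $t^{-(1+2j+m)/2}$ already gives the bound, while in the far regime $|x|\geq t^{1/2}$ the Gaussian absorbs any polynomial growth of $P_{j,m}(x/\sqrt{t})$ and yields decay of order $|x|^{-(1+2j+m)}$. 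Merging the two regimes produces the claimed $(t^{1/2}+|x|)^{-(1+2j+m)}$.

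The $L^p$ estimate follows either from integrating the pointwise bound or, more cleanly, from the scaling identity $\mathbf{K}(t,x) = t^{-1/2}\mathbf{K}(1,x/t^{1/2})$: differentiating and substituting $y = x/t^{1/2}$ in the $L^p$ integral gives
\[
\|\partial_t^j \partial_x^m \mathbf{K}(t,\cdot)\|_{L^p} = t^{-j - m/2 - 1/2 + 1/(2p)}\, \|\partial_t^j \partial_x^m \mathbf{K}(1,\cdot)\|_{L^p},
\]
and the latter factor is finite by Schwartz decay. For the fractional estimate involving $\Lambda^{-\sigma}$ I would use the Riesz-potential representation $\Lambda^{-\sigma}g = c_\sigma\,|x|^{\sigma-1}\ast g$ with the same rescaling, which reduces matters to the finiteness of $\|\Lambda^{-\sigma}\partial_x^l \mathbf{K}(1,\cdot)\|_{L^p}$; this is standard, since the Riesz kernel convolved with a Schwartz function retains polynomial decay at infinity and is locally integrable at the origin for $\sigma\in(0,1)$.

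For the time-increment estimate I would split according to whether $a\geq t$ or $a<t$. In the regime $a\geq t$, the triangle inequality together with the $L^p$ estimates applied separately to $\partial_x^m \mathbf{K}(t+a,\cdot)$ and $\partial_x^m \mathbf{K}(t,\cdot)$ suffices, since then $\min\{1,a/t\}=1$ and $t+a\sim t$. In the regime $a<t$, I would invoke the fundamental theorem of calculus
\[
\partial_x^m \mathbf{K}(t+a,x) - \partial_x^m \mathbf{K}(t,x) = \int_0^a \partial_s\, \partial_x^m \mathbf{K}(t+s,x)\, ds,
\]
push the $L^p$ norm inside the integral, and apply the $j=1$ bound uniformly on $s\in[0,a]$; since $t+s\sim t$, the integrand has size $t^{-(m+3-1/p)/2}$, and integration over $[0,a]$ yields the desired $a/t$ factor multiplying $t^{-(m+1-1/p)/2}$.

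I do not anticipate a serious obstacle here, as these are classical Gaussian estimates whose proofs rest entirely on explicit scaling and Schwartz decay. The only mildly delicate ingredient is the fractional estimate involving $\Lambda^{-\sigma}$, which I would treat via the Riesz representation above rather than by manipulating the Fourier multiplier directly in physical space.
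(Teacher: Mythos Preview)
Your proposal is correct and fills in precisely the details the paper omits; the paper's ``proof'' consists of the single remark that the estimates follow from the explicit form of $\mathbf{K}$ together with the elementary bound $b^m e^{-b}\lesssim_m 1$. One minor slip: in the case $a\geq t$ you write ``$t+a\sim t$,'' which is false there; the triangle-inequality argument still works, but only because you need the one-sided bound $(t+a)^{-(m+1-1/p)/2}\leq t^{-(m+1-1/p)/2}$, valid since the exponent is nonpositive.
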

	It is easy to check the above estimates by the definition of heat kernel and the fact that $b^me^{-b}\lesssim_m 1$, $\forall b>0,m\in\mathbb{N}^+$. We omit details here.

	We recall the following Schauder type lemmas from \cite{CHN}, the idea can be found in  \cite{Peskin}.
	\begin{lemma}\label{mainlem}
		Let $\mathbf{K}$ be the heat kernel, and let
		\begin{align*}
			g(t,x)=\int_0^t\int_{\mathbb{R}}\partial_t \mathbf{K}(t-\tau,x-y) f(\tau,y)dyd\tau.
		\end{align*}
		Then we have
		\begin{align*}
			\|g\|_{X_T^{\sigma,p}}\lesssim \|f\|_{X_T^{\sigma,p}},\ \ \ \forall T>0,\  \sigma\in(0,1-\alpha),\  p\in[1,\infty].
		\end{align*}
		And for $\sigma\in(1,\frac{3}{2})$, we have
		\begin{equation*}
		\|g\|_{X_T^{\sigma,\infty}}\lesssim \|f\|_{X_T^{\sigma,\infty}}+\|f\|_{X_T^{\sigma-\frac{1}{2},1}}.
		\end{equation*}
	\end{lemma}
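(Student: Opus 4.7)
The plan is to exploit the mean-zero property $\int_{\mathbb{R}} \partial_t \mathbf{K}(s, y) \, dy = 0$ (obtained by differentiating $\int \mathbf{K}(s, y) \, dy = 1$) together with careful time-integral splittings. This trades the non-integrable bound $\|\partial_t \mathbf{K}(s)\|_{L^1} \lesssim s^{-1}$ for an integrable one by subtracting $f(t, \cdot)$ and invoking the time-Hölder regularity encoded in $X_T^{\sigma, p}$.

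For the weighted sup bound in the first case, I would rewrite
\begin{align*}
g(t, x) = \int_0^t \partial_t \mathbf{K}(t-\tau) * [f(\tau) - f(t)](x) \, d\tau - [I - \mathbf{K}(t)] f(t)(x),
\end{align*}
where the second piece arises via $\partial_t \mathbf{K}(t-\tau) = -\partial_\tau \mathbf{K}(t-\tau)$ and integration by parts in $\tau$ applied to $\int_0^t \partial_t \mathbf{K}(t-\tau) * f(t) \, d\tau$; its $L^p$-norm is $\leq 2\|f(t)\|_{L^p} \leq 2 t^{-\sigma}\|f\|_{X_T^{\sigma, p}}$. The first piece is estimated via Young together with $\|\partial_t \mathbf{K}(t-\tau)\|_{L^1} \lesssim (t-\tau)^{-1}$ (Lemma \ref{lemheat}) and $\|f(\tau) - f(t)\|_{L^p} \leq \tau^{-\sigma-\alpha}(t-\tau)^\alpha \|f\|_{X_T^{\sigma, p}}$, producing the Beta integral $\int_0^t (t-\tau)^{\alpha - 1} \tau^{-\sigma - \alpha} \, d\tau = t^{-\sigma} B(\alpha, 1 - \sigma - \alpha)$; convergence requires exactly $\sigma < 1 - \alpha$.

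For the weighted Hölder seminorm, fix $0 < s < t < T$; the regime $t - s \geq s$ reduces to the sup bound via the triangle inequality, so assume $t - s < s$. I would decompose $g(t) - g(s) = (A) + (B) + (C) + (D)$, where $(A) = \int_s^t \partial_t \mathbf{K}(t-\tau) * [f(\tau) - f(t)] \, d\tau$, $(B) = \int_0^s [\partial_t \mathbf{K}(t-\tau) - \partial_t \mathbf{K}(s-\tau)] * [f(\tau) - f(s)] \, d\tau$, $(C) = \int_0^s \partial_t \mathbf{K}(t-\tau) * [f(s) - f(t)] \, d\tau$ (subtraction mismatch), and $(D) = -\{[I - \mathbf{K}(t)]f(t) - [I - \mathbf{K}(s)]f(s)\}$. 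Piece $(A)$ is handled exactly as above with $(s, t)$ replacing $(0, t)$. Piece $(B)$ combines the kernel-difference estimate $\|\partial_t \mathbf{K}(t-\tau) - \partial_t \mathbf{K}(s-\tau)\|_{L^1} \lesssim (s-\tau)^{-1}\min(1, (t-s)/(s-\tau))$ (from Lemma \ref{lemheat} via $\partial_t \mathbf{K} = \partial_x^2 \mathbf{K}$) with the Hölder bound on $f(\tau) - f(s)$; splitting the $\tau$-integral at $2s - t$ (where the min transitions) and at $s/2$ yields $s^{-\sigma-\alpha}(t-s)^\alpha\|f\|_{X_T^{\sigma, p}}$. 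Pieces $(C)$ and $(D)$ are similar, using the Hölder bound on $f(t) - f(s)$ and the heat-kernel smoothing.

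For $\sigma \in (1, 3/2)$ the Beta integral diverges at $\tau = 0$. I would split the $\tau$-integral at $t/2$: on $(t/2, t)$ the previous analysis still applies (since $\tau \simeq t$ removes the singularity at zero), yielding $t^{-\sigma}\|f\|_{X_T^{\sigma, \infty}}$; on $(0, t/2)$ abandon the cancellation trick entirely and estimate $\|\partial_t \mathbf{K}(t-\tau)\|_{L^\infty} \|f(\tau)\|_{L^1} \lesssim t^{-3/2}\tau^{-\sigma + 1/2}\|f\|_{X_T^{\sigma - 1/2, 1}}$, integrable at zero precisely when $\sigma < 3/2$ and yielding $t^{-3/2} \cdot t^{3/2 - \sigma} = t^{-\sigma}$. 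The Hölder-in-time bound for this range follows by combining the two splittings in the decomposition above. The main technical hurdle I foresee is the bookkeeping in piece $(B)$, where the $\tau$-integral must be split at multiple thresholds; for $\sigma \in (1, 3/2)$ one must further juggle the $X_T^{\sigma, \infty}$ and $X_T^{\sigma - 1/2, 1}$ norms on different sub-regions to produce uniform $(t-s)^\alpha s^{-\sigma - \alpha}$ bounds.
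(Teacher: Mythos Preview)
Your proposal is correct and follows essentially the same approach as the paper. The paper cites \cite[Lemma 2.3]{CHN} for the first estimate and proves the second explicitly via the same $t/2$-split (using $\|\partial_t\mathbf{K}(t-\tau)\|_{L^\infty}\|f(\tau)\|_{L^1}$ on $(0,t/2)$ and the cancellation $f(\tau)-f(t)$ on $(t/2,t)$); for the time difference it uses the three-piece decomposition $I_1+I_2+I_3$, which is a trivial regrouping of your $(A)+(B)+(C)+(D)$, with $I_1$ further split at $s/2$ exactly as you anticipate.
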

\begin{proof}
The first estimate follows directly from \label{mainlem}\cite[Lemma 2.3]{CHN}.
		We only prove the latter estimate. One has	
		\begin{align*}
		   	g(t)&=\int_0^\frac{t}{2}\partial_t \mathbf{K}(t-\tau)\ast f(\tau)d\tau +\int_{t/2}^t\partial_t \mathbf{K}(t-\tau)\ast (f(\tau)-f(t))d\tau+\int_{t/2}^t\partial_t \mathbf{K}(t-\tau)\ast f(t)d\tau\\
		&:=g_1(t)+g_2(t)+g_3(t).
		\end{align*}
		For $g_1$, by Lemma \ref{lemheat} we have 
		\begin{align*}
		\|g_{1}(t)\|_{L^\infty}\lesssim \int_0^\frac{t}{2}\|\partial_t\mathbf{K}(t-\tau)\|_{L^\infty}\|f(\tau)\|_{L^1}d\tau\lesssim t^{-\frac{3}{2}}\int_0^\frac{t}{2}\tau^{\frac{1}{2}-\sigma}d\tau\|f\|_{X_T^{\sigma-\frac{1}{2},1}}\lesssim t^{-\sigma}\|f\|_{X_T^{\sigma-\frac{1}{2},1}}
		\end{align*}
		Similarly, for $g_{2}$, we have 
		\begin{align*}
		\|g_{2}(t)\|_{L^\infty}
		&\lesssim \int_{\frac{t}{2}}^{t}\|\partial_t\mathbf{K}(t-s)\|_{L^1}\|f(t)-f(s)\|_{L^\infty}ds\\
		&\lesssim \int_{\frac{t}{2}}^{t}(t-s)^{-1+\alpha}s^{-\sigma-\alpha}ds\|f\|_{X_T^{\sigma,\infty}}\lesssim t^{-\sigma}\|f\|_{X_T^{\sigma,\infty}}.
		\end{align*}
		Moreover, observe that $g_3(t)=\mathbf{K}(t/2)\ast f(t)-f(t)$, then 
		$$
		\|g_3(t)\|_{L^\infty}\lesssim (1+\|\mathbf{K}(t/2)\|_{L^1})\|f\|_{L^\infty}\lesssim t^{-\sigma}\|f\|_{X^{\sigma,\infty}_T}.
		$$
		Thus, we get
		\begin{equation}\label{1}
		\sup_{0<t<T}t^\sigma\|g(t)\|_{L^\infty}\lesssim \|f\|_{X_T^{\sigma,\infty}}+\|f\|_{X_T^{\sigma-\frac{1}{2},1}}
.		\end{equation}	
		In the following we will denote $a=t-s>0$ for convenience, and we denote $\delta_\beta f(s,x)=f(s+\beta,x)-f(s,x)$. We write
		\begin{align*}
		g(t)-	g(s)=&\int_0^s\delta_a\partial_t \mathbf{K}(s-\tau)\ast f(\tau)d\tau+\int_s^t\partial_t \mathbf{K}(t-\tau)\ast f(\tau)d\tau\\
		=&\int_0^s\delta_a\partial_t \mathbf{K}(s-\tau)\ast (f(\tau)-f(s))d\tau+\int_s^t\partial_t \mathbf{K}(t-\tau)\ast (f(\tau)-f(t))d\tau\\
		&+\big(\int_0^s\delta_a\partial_t \mathbf{K}(s-\tau)\ast f(s)d\tau+\int_s^t\partial_t \mathbf{K}(t-\tau)\ast f(t)d\tau\big)\\
		:=&I_1+I_2+I_3.
		\end{align*}
		For $I_1$, we still use the decomposition that
		\begin{align*}
		I_1= \left(\int_0^{\frac{s}{2}}+\int_{\frac{s}{2}}^{s}\right)\delta_a\partial_t\mathbf{K}(t-\tau)\ast (f(\tau)-f(s))d\tau=I_{11}+I_{12}.
.		\end{align*}
		For $I_{11}$, we apply Lemma \ref{lemheat} to obtain
		\begin{align*}
		\|I_{11}\|_{L^\infty}&\lesssim\int_0^{\frac{s}{2}}\|\delta_a\partial_t\mathbf{K}(s-\tau)\|_{L^\infty}\|f(\tau)-f(s)\|_{L^1}d\tau\lesssim a^\alpha s^{-\frac{3}{2}-\alpha}\int_0^\frac{s}{2}\tau^{\frac{1}{2}-\sigma}d\tau\|f\|_{X_T^{\sigma-\frac{1}{2},1}}\\
		&\lesssim a^\alpha s^{-\sigma-\alpha}\|f\|_{X_T^{\sigma-\frac{1}{2},1}}.		\end{align*}
		For $I_{12}$, we have 
		\begin{align*}
		\|I_{12}\|_{L^\infty}&\lesssim \int_{\frac{s}{2}}^{s}\|\delta_a\partial_t\mathbf{K}(s-\tau)\|_{L^1}\|f(s)-f(\tau)\|_{L^\infty}ds\\
		&\lesssim s^{-\sigma-\alpha}\int_{\frac{s}{2}}^{s}(s-\tau)^{-1+\alpha}\min\{1,\frac{a}{s-\tau}\}d\tau\|f\|_{X_T^{\sigma,\infty}}\\
		&\lesssim a^\alpha s^{-\sigma-\alpha}\|f\|_{X_T^{\sigma,\infty}}.
		\end{align*}
		For $I_2$, we have 
		\begin{align*}
		\|I_2\|_{L^\infty}&\lesssim \int_s^t \|\partial_t \mathbf{K}(t-\tau)\|_{L^1}\|f(\tau)-f(t)\|_{L^\infty}d\tau\\
		&\lesssim \int_s^t \frac{1}{(t-\tau)^{1-\alpha}}\frac{1}{\tau^{\sigma+\alpha}}d\tau \|f\|_{X_T^{\sigma,\infty}}\lesssim a^\alpha s^{-\sigma-\alpha}\|f\|_{X_T^{\sigma,\infty}}.
		\end{align*}
		For $I_3$, we have
		\begin{align*}
		\|I_3\|_{L^\infty}\lesssim &\|f(s)\ast \mathbf{K}(t-s)-f(s)\ast \mathbf{K}(t)-f(s)+f(s)\ast \mathbf{K}(s)+f(t)-f(t)\ast \mathbf{K}(t-s)\|_{L^\infty}\\
		\lesssim &\|f(t)-f(s)\|_{L^\infty}+\|f(s)\ast(\mathbf{K}(t)- \mathbf{K}(s))\|_{L^\infty}
		\lesssim s^{-\sigma-\alpha}a^\alpha\|f\|_{X_T^{\sigma,\infty}},
		\end{align*}
		where we applied Lemma \ref{lemheat} to get $\|\mathbf{K}(t)- \mathbf{K}(s)\|_{L^1}\lesssim \frac{(t-s)^\alpha}{s^\alpha}$ in the last inequality. Hence we obtain that
	\begin{equation*}
		\sup_{0<s<t<T}\frac{s^{\sigma+\alpha}}{a^\alpha}\|g(t)-g(s)\|_{L^\infty}\lesssim \|f\|_{X_T^{\sigma,\infty}}+\|f\|_{X_T^{\sigma-\frac{1}{2},1}}.
		\end{equation*}	
	Combining this with \eqref{1}, we complete the proof.
	\end{proof}
		\vspace{0.2cm}\\
The following lemma is a simplified version of \cite[Lemma 2.4]{CHN}.
	\begin{lemma}\label{lemlow}
		Let $$g(t,x)=\int_0^t\int_{\mathbb{R}} \partial_x \mathbf{K}(t-\tau,x,y) f(\tau,y)dyd\tau.$$ Then for $\sigma,\tilde \sigma\in(0,1)$, $\sigma\geq \tilde \sigma-\frac{1}{2}$, we have 
		$$
		\|g\|_{X_T^{\sigma,p}}\lesssim T^{\frac{1}{2}+\sigma-\tilde \sigma }\|f\|_{X_T^{\tilde{\sigma},p}},\ \ \ \forall \ 1\leq p\leq \infty.
		$$
	\end{lemma}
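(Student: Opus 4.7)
The plan is to bound separately the static $L^p$ piece and the time-Hölder piece of $\|g\|_{X_T^{\sigma,p}}$, both via Young's inequality combined with the two heat-kernel ingredients from Lemma \ref{lemheat}, namely
\begin{equation*}
\|\partial_x\mathbf{K}(t)\|_{L^1}\lesssim t^{-1/2}\quad\text{and}\quad \|\partial_x\mathbf{K}(t+a)-\partial_x\mathbf{K}(t)\|_{L^1}\lesssim t^{-1/2-\alpha}a^\alpha.
\end{equation*}
The key difference with Lemma \ref{mainlem} is that $\partial_x\mathbf{K}$ has an \emph{integrable} time singularity ($t^{-1/2}$ instead of $t^{-1}$), so the subtraction trick used there will not be needed here.

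For the static part, Young's inequality yields
\begin{equation*}
\|g(t)\|_{L^p}\lesssim \int_0^t (t-\tau)^{-1/2}\tau^{-\tilde\sigma}\,d\tau\cdot\|f\|_{X_T^{\tilde\sigma,p}}\lesssim t^{1/2-\tilde\sigma}\|f\|_{X_T^{\tilde\sigma,p}},
\end{equation*}
where convergence uses $\tilde\sigma<1$. Multiplying by $t^\sigma$ and using $t\leq T$ together with the hypothesis $1/2+\sigma-\tilde\sigma\geq 0$ gives the desired bound $T^{1/2+\sigma-\tilde\sigma}\|f\|_{X_T^{\tilde\sigma,p}}$.

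For the time-Hölder part, set $a=t-s$ and separate into two regimes. When $s\leq a$, the triangle inequality $\|g(t)-g(s)\|_{L^p}\leq\|g(t)\|_{L^p}+\|g(s)\|_{L^p}$ combined with the static bound and the observation $s^{\sigma+\alpha}/a^\alpha\lesssim a^\sigma$ (from $s\leq a$ together with $\sigma,\alpha\geq 0$) closes the estimate. When $s\geq a$, decompose
\begin{equation*}
g(t)-g(s)=\int_0^s\bigl[\partial_x\mathbf{K}(t-\tau)-\partial_x\mathbf{K}(s-\tau)\bigr]*f(\tau)\,d\tau+\int_s^t\partial_x\mathbf{K}(t-\tau)*f(\tau)\,d\tau=:J_1+J_2,
\end{equation*}
estimate $J_1$ by the second kernel bound (splitting the $\tau$-integral at $s/2$, each half contributing $a^\alpha s^{1/2-\alpha-\tilde\sigma}\|f\|_{X_T^{\tilde\sigma,p}}$), and estimate $J_2$ by $s^{-\tilde\sigma}a^{1/2}\|f\|_{X_T^{\tilde\sigma,p}}$ via the bound $\tau^{-\tilde\sigma}\leq s^{-\tilde\sigma}$ on $[s,t]$.

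After multiplying by $s^{\sigma+\alpha}/a^\alpha$, the contribution of $J_1$ becomes $s^{1/2+\sigma-\tilde\sigma}\leq T^{1/2+\sigma-\tilde\sigma}$, using only the hypothesis $\sigma\geq\tilde\sigma-1/2$. The $J_2$ contribution is $s^{\sigma+\alpha-\tilde\sigma}a^{1/2-\alpha}$: if $\sigma+\alpha-\tilde\sigma\geq 0$, bound both factors by $T$; if $\sigma+\alpha-\tilde\sigma<0$, use $s\geq a$ in this regime to rewrite $s^{\sigma+\alpha-\tilde\sigma}a^{1/2-\alpha}\leq a^{1/2+\sigma-\tilde\sigma}\leq T^{1/2+\sigma-\tilde\sigma}$. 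The main technical point is precisely this sign-of-exponent dichotomy for $\sigma+\alpha-\tilde\sigma$, which is what forces the case split at $s=a$ and which makes the threshold $\sigma\geq\tilde\sigma-1/2$ sharp in the argument.
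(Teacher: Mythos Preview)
Your argument is correct. The paper itself does not prove this lemma: it simply records the statement as ``a simplified version of \cite[Lemma 2.4]{CHN}'' and gives no argument at all. Your proof supplies what the paper omits, and it follows exactly the expected template for such an estimate---Young's inequality against the $L^1$ heat-kernel bounds from Lemma~\ref{lemheat}, with a $J_1/J_2$ decomposition for the time increment---so there is no methodological comparison to make.

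One small point of presentation in the regime $s\leq a$: the observation $s^{\sigma+\alpha}/a^\alpha\lesssim a^\sigma$ handles the contribution $\|g(t)\|_{L^p}\lesssim t^{1/2-\tilde\sigma}$ cleanly (since $t\sim a$ there), but applied verbatim to $\|g(s)\|_{L^p}\lesssim s^{1/2-\tilde\sigma}$ it would give $a^\sigma s^{1/2-\tilde\sigma}$, which is unbounded as $s\to 0$ when $\tilde\sigma>1/2$. The fix is to group the $s$-powers first,
\[
\frac{s^{\sigma+\alpha}}{a^\alpha}\cdot s^{1/2-\tilde\sigma}=\frac{s^{1/2+\sigma+\alpha-\tilde\sigma}}{a^\alpha}\leq a^{1/2+\sigma-\tilde\sigma}\leq T^{1/2+\sigma-\tilde\sigma},
\]
using $s\leq a$ and the hypothesis $1/2+\sigma-\tilde\sigma\geq 0$. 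This is a trivial rewording of what you clearly intend, not a gap.
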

	
	\begin{lemma}\label{rem}
		Suppose 
		$$
		g(t,x)=\int_0^t\int_{\mathbb{R}}\mathbf{K}(t-\tau,x-y)R(\tau,y)dyd\tau.
		$$
		Let $l=0,1$. Then for any  $\frac{1}{1+2\alpha}<p<\frac{1}{2\alpha}$ and $\sigma\geq \frac{1+l}{2}-\frac{1}{2p}$,  there holds 
		$$
		\|\partial_x^lg\|_{X_T^{\sigma,p}}\lesssim T^{\sigma-\frac{1+l}{2}+\frac{1}{2p}}(\|R\|_{L_{T}^1}+\|R\|_{X_T^{1,1}}).
		$$
	\end{lemma}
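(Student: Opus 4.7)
My plan is to bound the two parts of the $X_T^{\sigma,p}$ norm separately, using the same early/late time-splitting strategy that underlies Lemmas \ref{mainlem} and \ref{lemlow}.

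For the weighted sup bound $\sup_{0<t<T}t^\sigma\|\partial_x^l g(t)\|_{L^p}$, I would split $\int_0^t=\int_0^{t/2}+\int_{t/2}^t$. On the early slice the heat kernel bound from Lemma \ref{lemheat} gives $\|\partial_x^l\mathbf{K}(t-\tau)\|_{L^p}\lesssim t^{\frac{1}{2p}-\frac{1+l}{2}}$, which I pull outside, leaving $\int_0^{t/2}\|R(\tau)\|_{L^1}d\tau\leq\|R\|_{L^1_{T,x}}$. On the late slice I instead pull out the pointwise estimate $\|R(\tau)\|_{L^1}\lesssim\tau^{-1}\|R\|_{X_T^{1,1}}\lesssim t^{-1}\|R\|_{X_T^{1,1}}$ coming from the first piece of the $X_T^{1,1}$ norm and compute $\int_{t/2}^t(t-\tau)^{\frac{1}{2p}-\frac{1+l}{2}}d\tau\lesssim t^{\frac{1}{2p}+\frac{1-l}{2}}$. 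Combining these yields $\|\partial_x^l g(t)\|_{L^p}\lesssim t^{\frac{1}{2p}-\frac{1+l}{2}}(\|R\|_{L^1_{T,x}}+\|R\|_{X_T^{1,1}})$, and the assumption $\sigma\geq\frac{1+l}{2}-\frac{1}{2p}$ converts $t^\sigma\cdot t^{\frac{1}{2p}-\frac{1+l}{2}}$ into $T^{\sigma-\frac{1+l}{2}+\frac{1}{2p}}$.

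For the weighted time-Hölder seminorm I set $a=t-s$ and distinguish two regimes. When $a\geq s$, the triangle inequality together with the sup bound already proved gives $\|\partial_x^l g(t)-\partial_x^l g(s)\|_{L^p}\lesssim s^{-\sigma}T^{\sigma-\frac{1+l}{2}+\frac{1}{2p}}(\|R\|_{L^1_{T,x}}+\|R\|_{X_T^{1,1}})$, and $s^{\sigma+\alpha}a^{-\alpha}\cdot s^{-\sigma}=(s/a)^\alpha\leq 1$ closes the estimate. When $a<s$, I decompose $g(t)-g(s)=P_1+P_2$ with $P_1=\int_0^s[\mathbf{K}(t-\tau)-\mathbf{K}(s-\tau)]\ast R(\tau)d\tau$ and $P_2=\int_s^t\mathbf{K}(t-\tau)\ast R(\tau)d\tau$. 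For $P_1$ the kernel-difference bound of Lemma \ref{lemheat} gives $\|\partial_x^l[\mathbf{K}(t-\tau)-\mathbf{K}(s-\tau)]\|_{L^p}\lesssim a^\alpha(s-\tau)^{\frac{1}{2p}-\frac{1+l}{2}-\alpha}$, and the same early/late split of $\int_0^s$ delivers $\|P_1\|_{L^p}\lesssim a^\alpha s^{\frac{1}{2p}-\frac{1+l}{2}-\alpha}(\|R\|_{L^1_{T,x}}+\|R\|_{X_T^{1,1}})$. For $P_2$, the constraint $a<s$ forces $\tau\in(s,t)\subset(s,2s)$, so $\|R(\tau)\|_{L^1}\lesssim s^{-1}\|R\|_{X_T^{1,1}}$; the kernel integral contributes $a^{\frac{1}{2p}+\frac{1-l}{2}}$, which becomes $a^\alpha s^{\frac{1}{2p}+\frac{1-l}{2}-\alpha}$ using $a<s$ and the nonnegativity of the exponent $\frac{1}{2p}+\frac{1-l}{2}-\alpha$. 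Summing produces $s^{\sigma+\alpha}a^{-\alpha}(\|P_1\|_{L^p}+\|\partial_x^l P_2\|_{L^p})\lesssim T^{\sigma-\frac{1+l}{2}+\frac{1}{2p}}(\|R\|_{L^1_{T,x}}+\|R\|_{X_T^{1,1}})$.

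The most delicate point is the $P_2$ term: a direct estimate using $\|R(\tau)\|_{L^1}\lesssim\tau^{-1}$ for all $\tau<T$ produces an $s^{-1}$ factor that cannot be absorbed into $s^{-\sigma-\alpha}$ unless $\sigma+\alpha\geq 1$, a condition not assumed here. The two-regime structure is what circumvents this: in the regime $a\geq s$ one avoids $P_2$ altogether via the triangle inequality, and in the regime $a<s$ the constraint $\tau\sim s$ on the domain of integration replaces the singular $\tau^{-1}$ by the uniform $s^{-1}$. The hypothesis $p<\frac{1}{2\alpha}$ enters as the integrability condition $\frac{1}{2p}-\frac{1+l}{2}-\alpha>-1$ in the late part of $P_1$ when $l=1$, and also ensures nonnegativity of $\frac{1}{2p}+\frac{1-l}{2}-\alpha$ in the $P_2$ estimate.
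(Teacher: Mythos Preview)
Your proof is correct and follows essentially the same strategy the paper employs. The paper itself does not prove this particular estimate but simply cites \cite[Lemma 2.6]{CHN}; however, the paper does give the proof of the companion estimate (the case $\sigma\in(1,\tfrac32)$, $p=\infty$) in full, and that argument is structurally identical to yours: an early/late split $\int_0^{t/2}+\int_{t/2}^t$ for the sup bound, then the dichotomy $a\gtrsim s$ (handled by the triangle inequality and the sup bound) versus $a\lesssim s$ (handled by the $J_1+J_2$ decomposition, your $P_1+P_2$). Your identification of where the hypothesis $p<\tfrac{1}{2\alpha}$ enters---the integrability of $(s-\tau)^{\frac{1}{2p}-1-\alpha}$ in the late piece of $P_1$ for $l=1$, and the nonnegativity of the exponent in the $P_2$ estimate---is exactly right. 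One cosmetic point: in your final display you write $\|P_1\|_{L^p}$ but $\|\partial_x^l P_2\|_{L^p}$; the $\partial_x^l$ should of course appear in both.
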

	And for $\sigma\in(1,\frac{3}{2})$, we have 
	\begin{equation*}
	\|\partial_xg\|_{X_T^{\sigma,\infty}}\lesssim T^{\sigma-1}(\|R\|_{L_{T,x}^1}+\|R\|_{X_T^{\frac{3}{2},\infty}}).
	\end{equation*}
	\begin{proof} The first estimate follows directly from \cite[Lemma 2.6]{CHN}.
		We only prove the latter estimate. Obviously we have
		\begin{equation*}
		\|\partial_xg(t)\|_{L^\infty}\lesssim \|(\int_0^{\frac{t}{2}}+\int_{\frac{t}{2}}^{t})\partial_x\mathbf{K}(t-s)\ast R(s)ds\|_{L^\infty}\lesssim \|g_1\|_{L^\infty}+\|g_2\|_{L^\infty}.
		\end{equation*}
		For $g_1$ we have 
		\begin{equation*}
		\|g_1(t)\|_{L^\infty}\lesssim \int_0^\frac{t}{2}\|\partial_x\mathbf{K}(t-s)\|_{L^\infty}\|R(s)\|_{L^1}ds\lesssim t^{-1}\|R\|_{L_{T,x}^1}.
		\end{equation*}
		For $g_2$ we have
		\begin{equation*}
		\|g_2(t)\|_{L^\infty}\lesssim \int_{\frac{t}{2}}^{t}\|\partial_x\mathbf{K}(t-s)\|_{L^1}\|R(s)\|_{L^\infty}ds\lesssim t^{-1}\|R\|_{X_T^{\frac{3}{2},\infty}}.
		\end{equation*}
		So we have 
		\begin{equation*}
		\sup_{0<t<T}t^{\sigma}\|\partial_xg(t)\|_{L^\infty}\lesssim T^{\sigma-1}(\|R\|_{L_{T,x}^1}+\|R\|_{X_T^{\frac{3}{2},\infty}}).
		\end{equation*}
		Then we take time difference. We denote $a=t-s$. If $a\geq \frac{s}{4}$, it is easy to check that 
		\begin{align*}
		\sup_{0<s<t<T}s^{\sigma+\alpha}\frac{\|\partial_xg(t)-\partial_xg(s)\|_{L^\infty}}{a^\alpha}&\lesssim \sup_{0<s<t<T}s^{\sigma}(\|\partial_xg(t)\|_{L^\infty}+\|\partial_xg(s)\|_{L^\infty})\\
		&\lesssim  T^{\sigma-1}(\|R\|_{L_{T,x}^1}+\|R\|_{X_T^{\frac{3}{2},\infty}}).
		\end{align*}
		Then it suffices to prove the case $a<\frac{s}{4}$. We have
		\begin{align*}
		\partial_xg(t)-\partial_xg(s)=\int_0^s\delta_a\partial_x\mathbf{K}(s-\tau)\ast R(\tau)d\tau+\int_{s}^{t}\partial_x\mathbf{K}(t-\tau)\ast R(\tau)d\tau:=J_1+J_2.
		\end{align*}
		For $J_1$, we have 
		\begin{equation*}
		\|J_1\|_{L^\infty}\lesssim \|(\int_0^\frac{s}{2}+\int_{\frac{s}{2}}^{s})\partial_x\delta_a\mathbf{K}(s-\tau)\ast R(\tau)d\tau\|_{L^\infty}\lesssim\|J_{11}\|_{L^\infty}+\|J_{12}\|_{L^\infty}.
		\end{equation*}
		For $J_{11}$, we have 
		\begin{align*}
		\|J_{11}\|_{L^\infty}&\lesssim \int_{0}^{\frac{s}{2}}\|\partial_x\delta_a\mathbf{K}(s-\tau)\|_{L^\infty}\|R(\tau)\|_{L^1}ds\lesssim \int_{0}^{\frac{s}{2}}(s-\tau)^{-1-\alpha}a^\alpha\|R(\tau)\|_{L^1}ds\\
		&\lesssim a^\alpha s^{-1-\alpha}\|R\|_{L_{T,x}^1}.
		\end{align*}
		For $J_{12}$, we have 
		\begin{align*}
		\|J_{12}\|_{L^\infty}&\lesssim \int_{\frac{s}{2}}^{s}\|\partial_x\delta_a\mathbf{K}(s-\tau)\|_{L^1}\|R(\tau)\|_{L^\infty}d\tau\lesssim\int_{\frac{s}{2}}^{s}(s-\tau)^{-\frac{1}{2}-\alpha}a^\alpha \tau^{-\frac{3}{2}}\|R\|_{X_T^{\frac{3}{2},\infty}}d\tau\\
		&\lesssim s^{-1-\alpha}a^\alpha\|R\|_{X_T^{\frac{3}{2},\infty}}.
		\end{align*}
		For $J_2$, we have 
		\begin{equation*}
		\|J_2\|_{L^\infty}\lesssim \int_s^t\|\partial_x\mathbf{K}(t-\tau)\|_{L^1}\|R(\tau)\|_{L^\infty}\lesssim a^\alpha s^{-1-\alpha}\|R\|_{X_T^{\frac{3}{2},\infty}}.
		\end{equation*}
		Combine the estimates all above gives
		\begin{equation*}
		\sup_{0<s<t<T}s^{\sigma+\alpha}\frac{\|\partial_xg(t)-\partial_xg(s)\|_{L^\infty}}{(t-s)^\alpha}\lesssim T^{\sigma-1}(\|R\|_{L_{T,x}^1}+\|R\|_{X_T^{\frac{3}{2},\infty}}),
		\end{equation*}
		and the proof is done.
	\end{proof}\\
	
The following lemma is  a $BV$ version of Lemma \ref{mainlem}.
\begin{lemma}\label{mainlemBV}
		Let $\mathbf{K}$ be the heat kernel, and let
		\begin{align*}
			g(t,x)=\int_0^t\int_{\mathbb{R}}\partial_t \mathbf{K}(t-\tau,x-y) f(\tau,y)dyd\tau.
		\end{align*}
		Then we have
		\begin{align}\label{re}
			\|g\|_{BV^{\sigma}_T}\lesssim \|f\|_{BV^{\sigma}_T},\ \ \ \forall T>0,\  \sigma\in(0,1-\alpha).
		\end{align}
	\end{lemma}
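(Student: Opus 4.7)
I would prove this lemma by transcribing the argument of Lemma \ref{mainlem} into the $BV$ setting, using the product estimate \eqref{bvcon}, namely $\|h_1\ast h_2\|_{BV}\lesssim\|h_1\|_{L^1}\|h_2\|_{BV}$, as a systematic substitute for H\"older's inequality. This keeps every heat-kernel factor in $L^1$ while the $BV$ norm stays on $f$, so all kernel estimates I need are in $L^1$ and provided by Lemma \ref{lemheat} (via $\partial_t\mathbf{K}=\partial_x^2\mathbf{K}$); in particular $\|\partial_t\mathbf{K}(t)\|_{L^1}\lesssim t^{-1}$ and, from the last bound of Lemma \ref{lemheat} with $m=2$, $p=1$, the time-difference estimate $\|\delta_a\partial_t\mathbf{K}(t)\|_{L^1}\lesssim t^{-1}\min\{1,a/t\}\lesssim t^{-1-\alpha}a^\alpha$.

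\textbf{Step 1: the supremum bound.} Decompose $g(t)=g_1(t)+g_2(t)+g_3(t)$ exactly as in the proof of Lemma \ref{mainlem}, where $g_1$ is the $\int_0^{t/2}$ piece, $g_2$ is $\int_{t/2}^t\partial_t\mathbf{K}(t-\tau)\ast(f(\tau)-f(t))d\tau$, and $g_3$ is the $f(t)$ remainder on $(t/2,t)$. Applying \eqref{bvcon} pointwise in $\tau$ and using the $BV_T^\sigma$ bounds $\|f(\tau)\|_{BV}\lesssim \tau^{-\sigma}\|f\|_{BV_T^\sigma}$ for $g_1$ and $\|f(\tau)-f(t)\|_{BV}\lesssim (t-\tau)^\alpha\tau^{-\sigma-\alpha}\|f\|_{BV_T^\sigma}$ for $g_2$, both pieces are controlled by $t^{-\sigma}\|f\|_{BV_T^\sigma}$; the constraint $\sigma<1-\alpha$ enters precisely to make $\int_0^{t/2}\tau^{-\sigma-\alpha}d\tau$ convergent. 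For $g_3$, the $d\tau$ integral telescopes to $\mathbf{K}(t/2)-\delta$, so $g_3(t)=f(t)\ast \mathbf{K}(t/2)-f(t)$, and \eqref{bvcon} with $\|\mathbf{K}(t/2)\|_{L^1}=1$ gives the same bound.

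\textbf{Step 2: the H\"older-in-time difference.} With $a=t-s$, decompose $g(t)-g(s)=I_1+I_2+I_3$ exactly as in Lemma \ref{mainlem}, and further split $I_1=I_{11}+I_{12}$ into the $(0,s/2)$ and $(s/2,s)$ parts. Combining \eqref{bvcon} with the $L^1$ bound $\|\delta_a\partial_t\mathbf{K}(t)\|_{L^1}\lesssim t^{-1-\alpha}a^\alpha$ (used in $I_{11}$) or $\lesssim t^{-1}\min\{1,a/t\}$ (used in $I_{12}$), together with the $BV_T^\sigma$ estimates on $\|f(\tau)-f(s)\|_{BV}$ or $\|f(\tau)-f(t)\|_{BV}$, each of $I_{11},I_{12},I_2$ is $\lesssim a^\alpha s^{-\sigma-\alpha}\|f\|_{BV_T^\sigma}$. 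For $I_3$, performing the $\tau$-integrations via $\partial_\tau\mathbf{K}(t'-\tau)=-\partial_t\mathbf{K}(t'-\tau)$ collapses everything to
\begin{align*}
I_3=(f(s)-f(t))+(f(t)-f(s))\ast\mathbf{K}(a)+f(s)\ast(\mathbf{K}(t)-\mathbf{K}(s)),
\end{align*}
which is estimated the same way using the $BV_T^\sigma$ bounds plus the extra input $\|\mathbf{K}(t)-\mathbf{K}(s)\|_{L^1}\lesssim (a/s)^\alpha$ from the last line of Lemma \ref{lemheat} with $m=0$. Summing the two steps gives \eqref{re}.

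\textbf{Main obstacle.} There is no genuinely new difficulty beyond the bookkeeping: the entire argument is the $BV$ version of Lemma \ref{mainlem}, with \eqref{bvcon} playing the role of H\"older. The only point worth highlighting is that every time-estimate on the kernel must be taken in $L^1$, so the crucial $a^\alpha$ gain in Step 2 comes from the $m=2$, $p=1$ case of the last bound of Lemma \ref{lemheat}, which is what transfers the smoothing effect of the heat semigroup onto the $BV$ regularity.
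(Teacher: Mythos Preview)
Your proposal is correct and follows essentially the same approach as the paper's proof: both transcribe the $X_T^{\sigma,p}$ argument of Lemma~\ref{mainlem} to the $BV$ setting by systematically replacing H\"older's inequality with the convolution bound \eqref{bvcon}, keeping all heat-kernel factors in $L^1$. The only cosmetic difference is that for the supremum bound (your Step~1) the paper uses a slightly leaner two-piece decomposition $g(t)=\int_0^t\partial_t\mathbf K(t-s)\ast(f(s)-f(t))\,ds+\int_0^t\partial_t\mathbf K(t-s)\ast f(t)\,ds$ rather than your three-piece split over $(0,t/2)$ and $(t/2,t)$; in that version the constraint $\sigma<1-\alpha$ shows up already in Step~1 (it makes $\int_0^t(t-s)^{-1+\alpha}s^{-\sigma-\alpha}ds$ finite), whereas in your decomposition it is actually needed only in Step~2 (for $I_{11}$), not in Step~1 as you wrote.
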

\begin{proof}
	The idea of  the proof follows \cite[Lemma 2.3]{CHN}. 
We can write that
\begin{equation*}
	g(t,x)=\int_0^t\partial_t\mathbf{K}(t-s)\ast(f(s)-f(t))ds+\int_0^t\partial_t\mathbf{K}(t-s)\ast f(t)ds=g_1(t,x)+g_2(t,x).
\end{equation*}
For $g_1$, applying Lemma \ref{lemheat} to obtain that for any $0<t<T$,
\begin{equation}\label{fnsg1}
	\begin{aligned}
		\|g_1(t)\|_{BV}\lesssim &\int_0^t \|\partial_t \mathbf{K}(t-s)\|_{L^1}\|f(s)-f(t)\|_{BV}ds\\
		\lesssim& \|f\|_{BV_T^\sigma}\int_0^t(t-s)^{-1+\alpha}s^{-\sigma-\alpha}ds\\
		\lesssim& t^{-\sigma}\|f\|_{BV_T^\sigma}.
	\end{aligned}
\end{equation}
For $g_2$, using the fact that $\lim_{t\to 0}\mathbf{K}(t,x)=\mathbf{Dirac}(x)$, we have 
\begin{equation*}
	g_2(t,x)=\int_{\mathbb{R}} \mathbf{K}(t,x-y)f(t,y)dy-f(t,x).
\end{equation*}
By \eqref{bvcon} we obtain 
\begin{equation}\label{fnsg2}
	\|g_2(t)\|_{BV}\lesssim  \|f(t)\|_{BV}.
\end{equation}
 We get from \eqref{fnsg1} and \eqref{fnsg2} that 
 \begin{align}\label{bv1}
 	\sup_{t\in[0,T]}t^\sigma\|g(t)\|_{BV}\lesssim \|f\|_{BV^\sigma_T}.
 \end{align}
Now we consider the time difference. Take  $0<s<t<T$. Denote $a=t-s$ and $\delta_a h(s,x)=h(s+a,x)-h(s,x)$, then 
\begin{align*}
	g(t)-	g(s)=&\int_0^s\delta_a\partial_t \mathbf{K}(s-\tau)\ast f(\tau)d\tau+\int_s^t\partial_t \mathbf{K}(t-\tau)\ast f(\tau)d\tau\\
	=&\int_0^s\delta_a\partial_t \mathbf{K}(s-\tau)\ast (f(\tau)-f(s))d\tau+\int_s^t\partial_t \mathbf{K}(t-\tau)\ast (f(\tau)-f(t))d\tau\\
	&+\left(\int_0^s\delta_a\partial_t \mathbf{K}(s-\tau)\ast f(s)d\tau+\int_s^t\partial_t \mathbf{K}(t-\tau)\ast f(t)d\tau\right)\\
	:=&I_1+I_2+I_3.
\end{align*}
By Lemma \ref{lemheat}, we can see that
\begin{align*}
&	\|I_1\|_{BV}\lesssim a^\alpha s^{-\sigma-\alpha}\|f\|_{BV_T^\sigma},\\
&	\|I_2\|_{BV}\lesssim \int_s^t(t-\tau)^{-1+\alpha}s^{-\sigma-\alpha}d\tau\|f\|_{BV_T^\sigma}\lesssim a^\alpha s^{-\sigma-\alpha}\|f\|_{BV_T^\sigma}.
\end{align*}
For $I_3$, we have that
\begin{equation*}
	\begin{aligned}
		I_3&=\mathbf{K}(t-s)\ast f(s)-f(s)-\mathbf{K}(t)\ast f(s)+\mathbf{K}(s)\ast f(s)+f(t)-\mathbf{K}(t-s)\ast f(t)\\
		&=(f(t)-f(s))-\mathbf{K}(t-s)\ast (f(t)-f(s))-(\mathbf{K}(t)-\mathbf{K}(s))\ast f(s).
	\end{aligned}
\end{equation*}
Hence,
\begin{align*}
	\|I_3\|_{BV}\lesssim \|f(t)-f(s)\|_{BV}+\|\mathbf{K}(t)-\mathbf{K}(s)\|_{l^1}\|f(s)\|_{BV}\lesssim s^{-\sigma-\alpha} a^\alpha \|f\|_{BV_T^\sigma}.
\end{align*}
	where we applied Lemma \ref{lemheat} to get $\|\mathbf{K}(t)- \mathbf{K}(s)\|_{L^1}\lesssim \frac{(t-s)^\alpha}{s^\alpha}$ in the last inequality.
Combining the estimates of  $I_i, i=1,2,3$, we have
\begin{equation}\label{fnstd}
	\sup_{0<s<t<T}s^{\sigma+\alpha}\frac{\|g(s)-g(t)\|_{BV}}{(t-s)^{\alpha}}\lesssim \|f\|_{BV_T^\sigma}.
\end{equation}
Then we finish the proof in view of \eqref{bv1} and \eqref{fnstd}.
\end{proof}\vspace{0.3cm}\\
We present the main lemma as follows.
	\begin{lemma}\label{lemma}Let $f $ be a solution to \eqref{eqpara}. Then for any $T>0$, there holds 
		\begin{align}
			&\|f\|_{L^2_{T,x}\cap X_T^{\frac{1}{2},2}}+	\|\partial_x f\|_{L^\frac{6}{5}_{T,x}\cap X_T^{\frac{5}{6},\frac{6}{5}}}\lesssim \|f_0\|_{\dot W^{-\frac{2}{3},\frac{6}{5}}}+ \| F\|_{L^\frac{6}{5}_{T,x}\cap X_T^{\frac{5}{6},\frac{6}{5}}}+T^{\frac{1}{4}}\|R\|_{L^1_{T,x}\cap X_T^{1,1}},\label{the1}\\
		&\|f\|_{X^{\frac{1}{2}-\gamma,1}_T}+\|\partial_xf\|_{X^{1-\gamma,1}_T}\lesssim \|f_0\|_{\dot W^{2\gamma-1,1}}+\|F\|_{X^{1-\gamma,1}_T}+T^{\frac{1}{2}-\gamma}\|R\|_{L^1_{T,x}\cap X_T^{1,1}}.\label{l1}
		\end{align}
		Moreover, if $R=0$, we have 
		\begin{align}
		&\|\partial_xf\|_{L^2_{T,x}\cap X_T^{\frac{1}{2},2}\cap X_T^{\frac{3}{4},\infty}}\lesssim \|f_0\|_{L^2}+\|F\|_{L^2_{T,x}\cap X_T^{\frac{1}{2},2}\cap X_T^{\frac{3}{4},\infty}},\label{u1}\\
		&\|\partial_x f\|_{X_T^{1-\gamma,\infty}}\lesssim \|f_0\|_{\dot W^{2\gamma,1}}+\|F\|_{X_T^{1-\gamma,\infty}},\label{uinf}\\
		&\|\partial_x f\|_{X_T^{\frac{1}{2}-\gamma,1}}\lesssim \|f_0\|_{\dot W^{2\gamma,1}}+\|F\|_{X_T^{\frac{1}{2}-\gamma,1}},\label{l2}\\
			&\|\partial_x f\|_{BV_{T}^{1-\gamma}}\lesssim \|f_0\|_{\dot W^{2\gamma,1}}+\| F\|_{BV_{T}^{1-\gamma}}\label{BV}.
		\end{align}
	\end{lemma}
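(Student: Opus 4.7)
The plan is to write $f=f_L+f_N+f_R$ as in \eqref{forso} and to estimate each of the three pieces in each of the stated norms, then sum. For the linear piece $f_L=\mathbf{K}(t)\ast f_0$ I would use Young's convolution inequality together with the heat kernel bounds in Lemma \ref{lemheat}: whenever $f_0$ is measured in $\dot W^{\pm s,p}$, I would shift the Riesz potential onto the kernel so that the factor $\|\partial_x^l\Lambda^{-\sigma}\mathbf{K}(t)\|_{L^q}\lesssim t^{\frac12(\frac1q-1-l+\sigma)}$ yields the requested time weight, and then recover the H\"older-in-time component of $X^{\sigma,p}_T$ or $BV^\sigma_T$ by combining these bounds with the time-difference estimate $\min\{1,a/t\}$ raised to the power $\alpha$.

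The source contribution $f_R$ is present only in \eqref{the1} and \eqref{l1}. There I would apply Lemma \ref{rem} with $(l,p,\sigma)=(1,6/5,5/6)$ to produce the factor $T^{1/4}$ of \eqref{the1}, and with $(l,p,\sigma)=(1,1,1-\gamma)$ to produce $T^{1/2-\gamma}$ of \eqref{l1}. The central step for the forcing piece $f_N$ is to integrate by parts in $y$ and use $\partial_x^2\mathbf{K}=\partial_t\mathbf{K}$ to obtain
\begin{equation*}
\partial_x f_N(t,x)=\int_0^t\!\!\int_{\mathbb{R}}\partial_t\mathbf{K}(t-\tau,x-y)\,F(\tau,y)\,dy\,d\tau.
\end{equation*}
Lemma \ref{mainlem} then delivers $\|\partial_x f_N\|_{X^{\sigma,p}_T}\lesssim\|F\|_{X^{\sigma,p}_T}$ for each $\sigma\in(0,1-\alpha)$, which covers the $X$-norm components of \eqref{the1}, \eqref{l1}, \eqref{uinf}, \eqref{l2} and \eqref{u1}. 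The $L^2_{T,x}$ and $L^{6/5}_{T,x}$ low-frequency parts are instead obtained from the original form $\partial_x\mathbf{K}\ast F$ via Lemma \ref{lemlow}, together with a standard energy identity for the $L^2$ piece.

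The most delicate step is the endpoint $BV$ bound \eqref{BV}. For $f_L$ I would use the embedding $\|\cdot\|_{BV}\leq\|\partial_x(\cdot)\|_{L^1}$ from \eqref{embed} together with the identity $\partial_x^2 f_L=\partial_x^2\Lambda^{-2\gamma}\mathbf{K}(t)\ast g$, where $g=\Lambda^{2\gamma}f_0$ so that $\|g\|_{L^1}=\|f_0\|_{\dot W^{2\gamma,1}}$, and the kernel decay $\|\partial_x^2\Lambda^{-2\gamma}\mathbf{K}(t)\|_{L^1}\lesssim t^{\gamma-1}$ from Lemma \ref{lemheat}, which yields the weighted sup part; the H\"older-in-time piece is then recovered by interpolating this pointwise bound against the $\min\{1,a/t\}$ estimate. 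For $f_N$, Lemma \ref{mainlemBV} applies directly at $\sigma=1-\gamma$, which lies inside the admissible range $(0,1-\alpha)$ precisely because $\alpha<\gamma$ is fixed at the start of the section. Introducing the H\"older-in-time component in $BV^\sigma_T$ is exactly the extra slack needed to close this endpoint estimate---the corresponding pure weighted-sup inequality without this H\"older component fails, as pointed out in the introduction.
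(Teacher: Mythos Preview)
Your plan is correct and matches the paper's proof almost step for step: the paper uses the same decomposition $f=f_L+f_N+f_R$, handles $f_L$ via the Riesz-potential shift and the heat-kernel bounds of Lemma~\ref{lemheat} (your \eqref{li1}--\eqref{li2} counterpart), handles $\partial_x f_N$ by writing it as $\int\partial_t\mathbf K\ast F$ and invoking Lemma~\ref{mainlem} and Lemma~\ref{mainlemBV}, and handles $f_R$ by Lemma~\ref{rem} with exactly the parameter choices you list. One small omission: in \eqref{the1} and \eqref{l1} there are also the undifferentiated norms $\|f\|_{X_T^{1/2,2}}$ and $\|f\|_{X_T^{1/2-\gamma,1}}$, and for the $f_N$ contribution these are not reached by Lemma~\ref{mainlem} (which only gives $\partial_x f_N$); the paper covers them by applying Lemma~\ref{lemlow} to $f_N=\int\partial_x\mathbf K\ast F$ with $(\sigma,\tilde\sigma,p)=(\tfrac12-\gamma,1-\gamma,1)$, which is the same lemma you already invoke for the $L^p_{T,x}$ pieces.
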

	\begin{remark}
	We note that the estimates \eqref{uinf}, \eqref{l2} and \eqref{BV} are used to obtain the local existence of system \eqref{inns}. In system \eqref{cpns}, the estimates \eqref{the1} and \eqref{l1} will be used to estimate $\theta$, the estimates \eqref{u1}, \eqref{l2} and \eqref{BV} will be used to estimate $u$.
	\end{remark}
	\begin{proof}
	It suffices to prove \eqref{l1},\eqref{l2} and \eqref{BV}, and  the proof of other estimates can be found in \cite[Lemma 3.1]{CHN}. Recall the formula of solution \eqref{forso}. We first estimate the linear part $f_L$, we claim that  
	\begin{align}
		&	\|f_L\|_{X^{\frac{1}{2}-\gamma,1}_T}+\|\partial_xf_L\|_{X^{1-\gamma,1}_T}\lesssim \|f_0\|_{\dot W^{2\gamma-1,1}},\label{li2}	\\	&	\|\partial_x f_L\|_{X_T^{1-\gamma,\infty}}+\|\partial_x^2 f_L\|_{X_{T}^{1-\gamma,1}}+\|\partial_x f_L\|_{X_{T}^{\frac{1}{2}-\gamma,1}}\lesssim\|f_0\|_{\dot W^{2\gamma,1}}\label{li1}.
	\end{align}
	We first prove \eqref{li2}. By definition \eqref{defneg}, it suffices to prove 
	$$
		\|f_L\|_{X^{\frac{1}{2}-\gamma,1}_T}+\|\partial_xf_L\|_{X^{1-\gamma,1}_T}\lesssim \|\bar f_0\|_{\dot W^{2\gamma,1}},
	$$
	for $\bar f_0$ that satisfies $\partial_x \bar f_0=f_0$. We integrate by parts and apply \eqref{move} to obtain 
	\begin{align*}
	   f_L(t,x)=\int_{\mathbb{R}}\partial_x\mathbf{K}(t,x-y)\bar f_0(y)dy=\int_{\mathbb{R}}\Lambda^{-2\gamma}\partial_x\mathbf{K}(t,x-y)\Lambda^{2\gamma}\bar f_0(y)dy.
	\end{align*}
Let $k=0,1$. 	By Young's inequality and Lemma \ref{lemheat}, we have for $0<s<t<T$,
\begin{align*}
  &\|\partial_x^kf_L(t)\|_{L^1}\lesssim \|\Lambda^{-2\gamma}\partial_x^{1+k}\mathbf{K}(t)\|_{L^1}\|\Lambda^{2\gamma}\bar f_0\|_{L^1}\lesssim t^{-\frac{1+k}{2}+\gamma}\|\bar f_0\|_{\dot W^{2\gamma,1}},\\
	   &\|\partial_x^kf_L(t)-\partial_x^kf_L(s)\|_{L^1}\lesssim \|\Lambda^{-2\gamma}\partial_x^{1+k}\mathbf{K}(t)-\Lambda^{-2\gamma}\partial_x^{1+k}\mathbf{K}(s)\|_{L^1}\|\Lambda^{2\gamma}\bar f_0\|_{L^1}\\
	   &\quad\quad\quad\ ~ \quad\quad\quad\quad\quad\quad\lesssim s^{-\frac{1+k}{2}+\gamma}\min\{1,\frac{t-s}{s}\}\|\bar f_0\|_{W^{2\gamma,1}}\lesssim (t-s)^\alpha s^{-\frac{1+k}{2}+\gamma-\alpha}\|\bar f_0\|_{W^{2\gamma,1}}.
	\end{align*}
	This yields \eqref{li2}. 
Then we prove \eqref{li1}. By the same arguments to prove \eqref{li2} with $\bar f_0$ replaced by $f_0$, we obtain that 
\begin{align}\label{1111}
\|\partial_x^2 f_L\|_{X_{T}^{1-\gamma,1}}+\|\partial_x f_L\|_{X_{T}^{\frac{1}{2}-\gamma,1}}\lesssim\|f_0\|_{\dot W^{2\gamma,1}}.
\end{align}
Moreover,  we have 
	\begin{align*}
		&	\|	\partial_x	f_L(t)\|_{L^\infty}\lesssim  \|\partial_x\Lambda^{-2\gamma}\mathbf{K}(t)\|_{L^\infty}\|\Lambda^{2\gamma}f_0\|_{L^1}\lesssim t^{\gamma-1}\|f_0\|_{\dot W^{2\gamma,1}},\\
&			\|\partial_x	f_L(t)-\partial_x	f_L(s)\|_{L^\infty}\lesssim \|\partial_x\Lambda^{-2\gamma}\mathbf{K}(t)-\partial_x\Lambda^{-2\gamma}\mathbf{K}(s)\|_{L^\infty}\|\Lambda^{2\gamma}g_0\|_{L^1}\\
			&\ \quad\quad\quad\ ~ \quad\quad\quad\quad\quad\quad\lesssim s^{\gamma-1}\min\{1,\frac{t-s}{s}\}\|\Lambda^{2\gamma}f_0\|_{L^1}\lesssim(t-s)^\alpha s^{\gamma-1-\alpha}\|f_0\|_{\dot W^{2\gamma,1}}.
	\end{align*}
Then we obtain 
$$
\|\partial_x f_L\|_{X^{1-\gamma,\infty}}\lesssim \|f_0\|_{\dot W^{2\gamma,1}}.
$$
This and \eqref{1111} yield \eqref{li1}. \\
Applying Lemma \ref{mainlem} with $(\sigma,p)\in\{(1-\gamma,1),(\frac{1}{2}-\gamma,1)\}$, and Lemma \ref{lemlow} with $(\sigma,\tilde \sigma,p)=(\frac{1}{2}-\gamma,1-\gamma,1)$ to obtain that 
\begin{align}\label{fN}
&\|f_N\|_{X^{\frac{1}{2}-\gamma,1}_T}+\|\partial_xf_N\|_{X^{1-\gamma,1}_T}\lesssim \|F\|_{X_T^{1-\gamma,1}},\\
&\|\partial_xf_N\|_{X^{1-\gamma,\infty}_T}\lesssim \|F\|_{X_T^{1-\gamma,\infty}},\ \ \ \ \|\partial_xf_N\|_{X^{\frac{1}{2}-\gamma,1}_T}\lesssim \|F\|_{X_T^{\frac{1}{2}-\gamma,1}}.\label{fn3}
\end{align}
Similarly, by Lemma \ref{mainlemBV}, one gets 
\begin{align}\label{fn2}
\|\partial_x f_N\|_{BV^{1-\gamma}_T}\lesssim \|F\|_{BV^{1-\gamma}_T}.
\end{align}
Finally, Lemma \ref{rem} implies that 
\begin{align}\label{fR}
\|f_R\|_{X^{\frac{1}{2}-\gamma,1}_T}+\|\partial_xf_R\|_{X^{1-\gamma,1}_T}\lesssim T^{\frac{1}{2}-\gamma}(\|R\|_{L^1_T}+\|R\|_{X_T^{1,1}}).
\end{align}
Then we obtain \eqref{l1} from \eqref{li2}, \eqref{fN}, \eqref{fR}. And \eqref{li1}, \eqref{fn3} yield \eqref{l2}.  Finally, \eqref{BV} is a result of \eqref{li1}, \eqref{fn2}, together with the fact that $\|\partial_x f_L\|_{BV_T^{1-\gamma}}\lesssim \|\partial_x^2 f_L\|_{X_T^{1-\gamma,1}}$. This completes the proof.
\end{proof}\vspace{0.3cm}\\
We introduce the following lemmas, which are helpful to estimate the nonlinear terms.
\begin{lemma}\label{lemproduct}
For any $\sigma\in[0,1], p\in[1,\infty]$, and any function $g,h:[0,T]\times \mathbb{R}\to \mathbb{R}$, there holds 
\begin{align*}
\|gh\|_{X_T^{\sigma,p}}\lesssim \|g\|_{X_T^{0,\infty}}\|h\|_{X_T^{\sigma,p}},\quad\quad \ \ \|gh\|_{BV^\sigma_T}\lesssim (\|g\|_{BV^0_T}+\|g\|_{X_T^{0,\infty}})(\|h\|_{BV^\sigma_T}+\|h\|_{X^{\sigma,\infty}_T}).
\end{align*}
\end{lemma}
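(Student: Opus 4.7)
The plan is to reduce both inequalities to two elementary pointwise-in-time product rules combined with careful bookkeeping of the weights $s^\sigma$ and $s^{\sigma+\alpha}(t-s)^{-\alpha}$ that appear in the definitions of $X^{\sigma,p}_T$ and $BV^\sigma_T$.

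First, I would record the two product rules I need: for each fixed time, $\|g(\tau)h(\tau)\|_{L^p}\leq \|g(\tau)\|_{L^\infty}\|h(\tau)\|_{L^p}$ is immediate, while the BV product rule $\|g(\tau)h(\tau)\|_{BV}\leq \|g(\tau)\|_{L^\infty}\|h(\tau)\|_{BV}+\|g(\tau)\|_{BV}\|h(\tau)\|_{L^\infty}$ follows directly from the definition of the BV norm in the paper by applying $|ab-cd|\leq |b||a-c|+|c||b-d|$ to each increment $|g(x_{i+1})h(x_{i+1})-g(x_i)h(x_i)|$ and summing over the partition.

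For the supremum parts of both norms I would just multiply the corresponding inequality by $s^\sigma$ and recognize each factor as part of one of the weighted norms: for instance, $s^\sigma\|g(s)h(s)\|_{L^p}\leq \|g(s)\|_{L^\infty}\cdot s^\sigma\|h(s)\|_{L^p}\leq \|g\|_{X^{0,\infty}_T}\|h\|_{X^{\sigma,p}_T}$, and for the BV norm the two resulting pieces $s^\sigma\|g(s)\|_{BV}\|h(s)\|_{L^\infty}$ and $s^\sigma\|g(s)\|_{L^\infty}\|h(s)\|_{BV}$ are absorbed into $\|g\|_{BV^0_T}\|h\|_{X^{\sigma,\infty}_T}$ and $\|g\|_{X^{0,\infty}_T}\|h\|_{BV^\sigma_T}$ respectively.

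For the H\"older-in-time parts I would use the standard splitting
\[
g(t)h(t)-g(s)h(s)=(g(t)-g(s))h(t)+g(s)(h(t)-h(s)),
\]
apply each of the two product rules to the two summands, and then distribute the weight $s^{\sigma+\alpha}(t-s)^{-\alpha}$ as $\bigl(s^\alpha(t-s)^{-\alpha}\bigr)\cdot s^\sigma$: the factor $s^\alpha(t-s)^{-\alpha}$ is paired with whichever time-difference factor appears (yielding a H\"older seminorm of $g$ or $h$), while $s^\sigma$ is paired with the remaining value at time $s$ or $t$. When this remaining term is evaluated at $t$ I would use $s^\sigma\leq t^\sigma$ (valid because $\sigma\geq 0$ and $s<t$) to convert it into the weight that appears in the definition of the relevant norm.

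The only step that is remotely delicate is checking that every one of the four cross-terms produced for the BV estimate falls on the right side of the mixed product $(\|g\|_{BV^0_T}+\|g\|_{X_T^{0,\infty}})(\|h\|_{BV^\sigma_T}+\|h\|_{X_T^{\sigma,\infty}})$: a BV factor of a difference costs either $\|g\|_{BV^0_T}$ or $\|h\|_{BV^\sigma_T}$, while an $L^\infty$ factor of a difference costs $\|g\|_{X_T^{0,\infty}}$ or $\|h\|_{X_T^{\sigma,\infty}}$, and this pairing is exactly what forces the sum on each side. There is no genuine obstacle beyond this bookkeeping; no iteration, paraproduct decomposition, or commutator argument is needed.
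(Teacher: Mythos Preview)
Your proposal is correct and follows essentially the same route as the paper: both use the pointwise H\"older and BV product rules at fixed time, the identity $gh(t)-gh(s)=(g(t)-g(s))h(t)+g(s)(h(t)-h(s))$ (the paper uses the equivalent splitting $g(t)(h(t)-h(s))+(g(t)-g(s))h(s)$), and the same distribution of the weights $s^\sigma$ and $s^{\sigma+\alpha}(t-s)^{-\alpha}$ between the two factors. The four cross-terms in the BV case match exactly, so there is no meaningful difference between the two arguments.
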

\begin{proof}
It is easy to check that 
		\begin{align*}
		\sup_{t\in[0,T]}t^{\sigma}\|gh(t)\|_{L^p}&\lesssim \|g\|_{L_{T,x}^{\infty}}\sup_{t\in[0,T]}t^{\sigma}\|h(t)\|_{L^{p}}\lesssim\|g\|_{X_T^{0,\infty}}\|h\|_{X_T^{\sigma,p}},\\
		\sup_{t\in[0,T]}t^{\sigma}\|gh(t)\|_{BV}&\lesssim \|g\|_{L_{T,x}^{\infty}}\sup_{t\in[0,T]}t^{\sigma}\|h(t)\|_{BV}+\sup_t\|g(t)\|_{BV}\sup_t t^\sigma \|f(t)\|_{L^\infty}\\
		&\lesssim\|g\|_{X_T^{0,\infty}}\|h\|_{BV_T^{\sigma}}+\|g\|_{BV_T^{0}}\|h\|_{X_T^{\sigma,\infty}}.
		\end{align*}
		Moreover, we have 
		\begin{equation*}
		\begin{aligned}
	&	\sup_{0<s<t<T}s^{\sigma+\alpha}\frac{\|gh(t)-gh(s)\|_{L^p}}{(t-s)^\alpha}\\
		&\lesssim \|g\|_{L_{T,x}^\infty}\sup_{0<s<t<T}s^{\sigma+\alpha}\frac{\|h(t)-h(s)\|_{L^{p}}}{(t-s)^{\alpha}}+\sup_{s\in[0,T]}\left(s^{\sigma}\|h(s)\|_{L^{p}}\right)\sup_{0<s<t<T}\left(\frac{s^\alpha\|g(t)-g(s)\|_{L^\infty}}{(t-s)^\alpha}\right)\\
		&\lesssim \|g\|_{X_T^{0,\infty}}\|h\|_{X_T^{\sigma,p}},
		\end{aligned}
		\end{equation*}
and 
		\begin{equation*}
		\begin{aligned}
		&\sup_{0<s<t<T}s^{\sigma+\alpha}\frac{\|gh(t)-gh(s)\|_{BV}}{(t-s)^\alpha}\\
		\lesssim& \sup_{0<s<t<T}s^{\sigma+\alpha}\|g(t)\|_{BV}\frac{\|h(t)-h(s)\|_{L^\infty}}{(t-s)^\alpha}+\sup_{0<s<t<T}s^{\sigma+\alpha}\|g(t)\|_{L^\infty}\frac{\|h(t)-h(s)\|_{BV}}{(t-s)^\alpha}\\
		&+\sup_{s<t<T}s^{\sigma+\alpha}\|h(s)\|_{BV}\frac{\|g(t)-g(s)\|_{L^\infty}}{(t-s)^\alpha}+\sup_{s<t<T}s^{\sigma+\alpha}\|h(s)\|_{L^\infty}\frac{\|g(t)-g(s)\|_{BV}}{(t-s)^\alpha}\\
		\lesssim& (\|g\|_{BV^0_T}+\|g\|_{X_T^{0,\infty}})(\|h\|_{BV^\sigma_T}+\|h\|_{X^{\sigma,\infty}_T}).
		\end{aligned}
		\end{equation*}
This completes the proof.
\end{proof}\vspace{0.3cm}\\
For $I_1,I_2\subset\mathbb{R}$, denote  the convex hull $\text{Conv}\{I_1,I_2\}=\{x\in\mathbb{R}:\exists x_1\in I_1,x_2\in I_2, r\in[0,1]\ \text{such that }\ x=rx_1+(1-r)x_2\}$. We have the following lemma.
\begin{lemma}\label{lemfra}
    Consider functions $g,h:[0,T]\times \mathbb{R}\to \mathbb{R}$, and $B:\mathbb{R}\to \mathbb{R}$ that  satisfy $$\max_{k=0,1,2,3}\frac{d^kB(z)}{dz^k}\leq \lambda_0, \ \ \ \forall z\in \text{Conv}\{ \operatorname{range}(g),\operatorname{range}(h)\},$$ then there holds 
    \begin{align*}
  &  \left\|B(g)-B(h)\right\|_{X^{0,\infty}_T}\lesssim \|g-h\|_{X^{0,\infty}_T}(1+\|g\|_{X^{0,\infty}_T}+\|h\|_{X^{0,\infty}_T}),\\
  &  \left\|B(g)-B(h)\right\|_{X^{0,1}_T}\lesssim \|g-h\|_{X^{0,1}_T}(1+\|g\|_{X^{0,\infty}_T}+\|h\|_{X^{0,\infty}_T}),\\
   & \left\|B(g)-B(h)\right\|_{BV^{0}_T}\lesssim (\|g-h\|_{BV^{0}_T}+\|g-h\|_{X^{0,\infty}_T})(1+\|g\|_{BV^{0}_T}+\|h\|_{BV^{0}_T}+\|g\|_{X^{0,\infty}_T}+\|h\|_{X^{0,\infty}_T})^2,
    \end{align*}
    where the implicit constants depend only on $\lambda_0$.
\end{lemma}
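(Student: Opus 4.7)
\textbf{Plan of proof for Lemma \ref{lemfra}.} My approach is to factor the difference via the fundamental theorem of calculus,
\begin{equation*}
B(g) - B(h) = (g-h)\,\Phi(g,h), \qquad \Phi(g,h) := \int_0^1 B'\bigl(h + r(g-h)\bigr)\,dr,
\end{equation*}
and then invoke Lemma \ref{lemproduct} to bound the three norms of the product $(g-h)\Phi$. In each case the factor $g-h$ provides the correct ``small'' norm on the right-hand side, while $\Phi$ must be controlled in the companion norms that appear in Lemma \ref{lemproduct}. The assumption $|B^{(k)}| \leq \lambda_0$ for $k=0,1,2,3$ is exactly what is needed to pass from $g,h$ to $\Phi$ through the repeated chain rule arguments below.

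For the first two estimates, the analysis of $\Phi$ is elementary. Pointwise one has $|\Phi| \leq \lambda_0$, giving $\|\Phi(t)\|_{L^\infty} \lesssim 1$ uniformly in $t$. For the H\"older-in-time part, writing $H_r = h + r(g-h)$ and applying the mean value theorem to $B'$ (using the bound on $B''$) gives
\begin{equation*}
|B'(H_r(t,x)) - B'(H_r(s,x))| \lesssim |g(t,x)-g(s,x)| + |h(t,x)-h(s,x)|,
\end{equation*}
hence $\|\Phi\|_{X^{0,\infty}_T} \lesssim 1 + \|g\|_{X^{0,\infty}_T} + \|h\|_{X^{0,\infty}_T}$. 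Inserting this into the first half of Lemma \ref{lemproduct} (with $(\sigma,p)=(0,\infty)$ and $(0,1)$ respectively) yields the first two displayed inequalities directly.

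The main obstacle is the third (BV) estimate, which requires a quadratic bound on $\|\Phi\|_{BV^0_T} + \|\Phi\|_{X^{0,\infty}_T}$. The pointwise-in-$t$ BV bound is not difficult: since $B'$ is Lipschitz with constant $\lambda_0$, the composition estimate $\|B'(F)\|_{BV} \leq \lambda_0 \|F\|_{BV}$ (immediate from the definition of BV) gives $\|\Phi(t)\|_{BV} \lesssim \|g\|_{BV^0_T} + \|h\|_{BV^0_T}$. The delicate point is the BV H\"older-in-time part: using the mean value theorem,
\begin{equation*}
B'(H_r(t)) - B'(H_r(s)) = \Psi_r \cdot \bigl(H_r(t) - H_r(s)\bigr), \qquad \Psi_r := \int_0^1 B''\bigl(H_r(s) + r'(H_r(t)-H_r(s))\bigr)\,dr',
\end{equation*}
and one must bound the BV norm of a \emph{product}. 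The $L^\infty$ bound $\|\Psi_r\|_{L^\infty}\leq \lambda_0$ is immediate, but the BV bound $\|\Psi_r\|_{BV} \lesssim \|g\|_{BV^0_T} + \|h\|_{BV^0_T}$ requires the further Lipschitz property of $B''$, i.e.\ the bound on $B'''$ (this is precisely where the hypothesis $k=3$ is used). Combining via the Leibniz-type inequality $\|fh\|_{BV} \leq \|f\|_{BV}\|h\|_{L^\infty} + \|f\|_{L^\infty}\|h\|_{BV}$ and integrating over $r$ produces
\begin{equation*}
\|\Phi\|_{BV^0_T} + \|\Phi\|_{X^{0,\infty}_T} \lesssim \bigl(1 + \|g\|_{BV^0_T} + \|h\|_{BV^0_T} + \|g\|_{X^{0,\infty}_T} + \|h\|_{X^{0,\infty}_T}\bigr)^2,
\end{equation*}
the square arising because the BV estimate of the product $\Psi_r \cdot (H_r(t)-H_r(s))$ mixes BV norms of $H_r$ with $L^\infty$ time-differences of $H_r$. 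Plugging this into the second half of Lemma \ref{lemproduct} with $\sigma = 0$ and $h \mapsto \Phi$, $g\mapsto g-h$ completes the proof of the third inequality.
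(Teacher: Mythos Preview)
Your proof is correct. The approach differs slightly from the paper's: the paper proceeds by writing down directly the pointwise inequalities for $\|B(g(t))-B(h(t))\|$ and for the time-differenced quantity $\|B(g(t))-B(h(t))-B(g(s))+B(h(s))\|$ in each of the $L^\infty$, $L^1$, and $BV$ norms (the $BV$ time-difference being a multi-term expansion), and then reads off the $X^{0,\infty}_T$, $X^{0,1}_T$, $BV^0_T$ bounds from the definition. You instead factor $B(g)-B(h)=(g-h)\Phi(g,h)$ via the fundamental theorem of calculus and reduce everything to bounding $\Phi$ in the companion norms required by Lemma~\ref{lemproduct}. The two arguments are really the same computation: your factorization and the product estimate of Lemma~\ref{lemproduct}, when unfolded, reproduce exactly the list of terms the paper writes down. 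Your route is a bit more modular (it recycles Lemma~\ref{lemproduct} rather than re-deriving product-type bounds by hand) and makes transparent why the hypothesis on $B'''$ enters precisely in the $BV$ H\"older-in-time piece; the paper's route is more self-contained since it does not appeal to the product lemma.
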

\begin{proof}
The first two estimates follows direct from the inequalities 
\begin{align*}
&\left\|B(g(t))-B(h(t))\right\|_{L^\infty}\lesssim \|g(t)-h(t)\|_{L^\infty},\\
&\left\|B(g(t))-B(h(t))-B(g(s))+B(h(s))\right\|_{L^\infty}\\
&\quad\quad\quad\lesssim \|(g-h)(t)-(g-h)(s)\|_{L^\infty}+\|(g-h)(s)\|_{L^\infty}(\|g(t)-g(s)\|_{L^\infty}+\|h(t)-h(s)\|_{L^\infty}),
\end{align*}
and 
\begin{align*}
&\left\|B(g(t))-B(h(t))\right\|_{L^1}\lesssim \|g(t)-h(t)\|_{L^1},\\
&\left\|B(g(t))-B(h(t))-B(g(s))+B(h(s))\right\|_{L^1}\\
&\quad\quad\quad\lesssim \|(g-h)(t)-(g-h)(s)\|_{L^1}+\|(g-h)(s)\|_{L^1}(\|g(t)-g(s)\|_{L^\infty}+\|h(t)-h(s)\|_{L^\infty}).
\end{align*}
For the last estimate, by the definiton  of $BV$ norm, it is easy to check that 
\begin{align*}
&\left\|B(g(t))-B(h(t))\right\|_{BV}\lesssim\|g(t)-h(t)\|_{BV}+ \|g(t)-h(t)\|_{L^\infty}(\|g(t)\|_{BV}+\|h(t)\|_{BV}),
\end{align*}
and 
\begin{align*}
&\left\|B(g(t))-B(h(t))-B(g(s))+B(h(s))\right\|_{BV}\\
&\lesssim \|(g-h)(t)-(g-h)(s)\|_{BV}+\|(g-h)(s)\|_{BV}(\|g(t)-g(s)\|_{L^\infty}+\|h(t)-h(s)\|_{L^\infty})\\
&+\|(g-h)(s)\|_{L^\infty}(\|g(t)-g(s)\|_{BV}+\|h(t)-h(s)\|_{BV})\\
&+(\|(g-h)(t)-(g-h)(s)\|_{L^\infty}+\|(g-h)(s)\|_{L^\infty}(\|g(t)-g(s)\|_{L^\infty}+\|h(t)-h(s)\|_{L^\infty}))\\
&\quad\times(\|g(t)\|_{BV}+\|h(t)\|_{BV}+\|g(s)\|_{BV}+\|h(s)\|_{BV}).
\end{align*}
Finally, by the definition of the norms in $X^{0,1}_T,X^{0,\infty}_T, BV^0_T$, we obtain the result.
\end{proof}
	\section{Well-posedness for the isentropic Navier–Stokes equations}\label{jum}
This section is devoted to prove Theorem \ref{thm1}. 
We construct a solution to \eqref{inns} by Banach fixed point theorem.
Define  the norms
\begin{align*}
&	\|v\|_{Y_T}:=	\|v\|_{X^{0,1}_T}+	\|v\|_{X_T^{0,\infty}}+\|v\|_{BV_T^0},\\
&	\|w\|_T:=\|\partial_x w\|_{X_T^{1-\gamma,\infty}}+\|\partial_x w\|_{X_T^{\frac{1}{2}-\gamma,1}}+\|\partial_x w\|_{BV^{1-\gamma}_T}.
\end{align*}
The main result is the following 
\begin{thm}\label{thmisen}
	There exist $\varepsilon_0,T\in(0,1)$ such that, for any  initial data $(v_0,u_0)$ satisfying
	\begin{align}\label{glocon1}
	   	 M_1:=  \|v_0-1\|_{L^1\cap BV}+\|u_0\|_{W^{2\gamma,1}}\leq\varepsilon_0, 
	\end{align} 
	the system \eqref{inns} admits a unique  solution $(v,u)$ satisfying \begin{align}\label{rethm3}
	   \|v-1\|_{Y_T}+\|u\|_{T}\lesssim M_1.
	\end{align}
\end{thm}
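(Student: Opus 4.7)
The plan is to construct the solution by the Banach fixed-point theorem. Given a candidate pair $(v, u)$ with $\|v - 1\|_{Y_T} + \|u\|_T \leq M$, I would define $\Phi(v, u) = (\tilde v, \tilde u)$ by letting $\tilde u$ solve the linear inhomogeneous heat equation
\begin{equation*}
\tilde u_t - \mu \tilde u_{xx} = \partial_x F, \qquad F := \mu u_x\bigl(\tfrac{1}{v} - 1\bigr) - \bigl(p(v) - p(1)\bigr),
\end{equation*}
with $\tilde u(0) = u_0$, and then setting $\tilde v(t) = v_0 + \int_0^t \tilde u_x(\tau, \cdot)\, d\tau$, which is consistent with the mass equation $v_t = u_x$. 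I look for a fixed point of $\Phi$ in the closed ball $\mathcal{B}_M = \{(v, u): \|v - 1\|_{Y_T} + \|u\|_T \leq M\}$ with $M = C_0 M_1$ for a large absolute constant $C_0$, and both $\varepsilon_0$ and $T$ chosen small at the end.

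The central step is to control $\|\tilde u\|_T$. Applying the three estimates \eqref{uinf}, \eqref{l2}, \eqref{BV} of Lemma \ref{lemma} with $R = 0$ gives
\begin{equation*}
\|\tilde u\|_T \lesssim \|u_0\|_{\dot W^{2\gamma,1}} + \|F\|_{X_T^{1-\gamma,\infty}} + \|F\|_{X_T^{1/2-\gamma,1}} + \|F\|_{BV_T^{1-\gamma}}.
\end{equation*}
The two pieces of $F$ behave differently. For the product $u_x(\tfrac{1}{v} - 1)$, I combine Lemma \ref{lemproduct} (to split the two factors) with Lemma \ref{lemfra} applied to $B(z) = 1/z - 1$ on $v$ (well-defined since $\|v - 1\|_{L^\infty} \lesssim M$ keeps $v$ bounded below by $1/2$), yielding a bound $\lesssim \|v - 1\|_{Y_T}\|u\|_T$; the required time decay is supplied by the $u_x$ factor alone. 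For the pure $v$-term $p(v) - p(1)$, which has no time-decaying companion, each norm simply pulls the time weight $t^\sigma$ out of the definition of $X_T^{\sigma,\cdot}$ or $BV_T^\sigma$, producing an explicit smallness
\begin{equation*}
\|p(v) - p(1)\|_{X_T^{1-\gamma,\infty}} + \|p(v) - p(1)\|_{X_T^{1/2-\gamma,1}} + \|p(v) - p(1)\|_{BV_T^{1-\gamma}} \lesssim T^{1/2-\gamma}\|v - 1\|_{Y_T}.
\end{equation*}
The outcome is $\|\tilde u\|_T \lesssim M_1 + \|v - 1\|_{Y_T}\|u\|_T + T^{1/2-\gamma}\|v - 1\|_{Y_T}$.

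Next I estimate $\tilde v - 1 = (v_0 - 1) + \int_0^t \tilde u_x\, d\tau$ in $Y_T = X_T^{0,1} \cap X_T^{0,\infty} \cap BV_T^0$. Each of the three parts is bounded by the corresponding norm of $v_0 - 1$ (all controlled by $M_1$, using the embedding $L^1 \cap BV \hookrightarrow L^\infty$ in one dimension) plus an integral $\int_0^t \|\tilde u_x(\tau)\|_{\cdot}\, d\tau$, where $\|\tilde u_x(\tau)\|_{L^\infty}, \|\tilde u_x(\tau)\|_{BV} \lesssim \tau^{-1+\gamma}\|\tilde u\|_T$ and $\|\tilde u_x(\tau)\|_{L^1} \lesssim \tau^{-1/2+\gamma}\|\tilde u\|_T$. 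The time-Hölder piece of each of these norms is treated by splitting the cases $t - s \leq s$ and $t - s > s$, after which the arithmetic yields a factor $T^\gamma$. One obtains $\|\tilde v - 1\|_{Y_T} \lesssim M_1 + T^\gamma \|\tilde u\|_T$. Combining with the bound on $\|\tilde u\|_T$ and choosing $\varepsilon_0$ and $T$ small shows that $\Phi$ sends $\mathcal{B}_M$ into itself.

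The contraction estimate is carried out by the same computation on the difference $F_1 - F_2 = \mu(u_{1x} - u_{2x})\bigl(\tfrac{1}{v_1} - 1\bigr) + \mu u_{2x}\bigl(\tfrac{1}{v_1} - \tfrac{1}{v_2}\bigr) + (p(v_2) - p(v_1))$, using the Lipschitz versions of Lemmas \ref{lemproduct} and \ref{lemfra}. The main obstacle is almost entirely bookkeeping: one must verify that every bilinear or compositional estimate lands in the correct time-weighted norm, and, crucially, that each nonlinear contribution comes either with a factor $\|v - 1\|_{Y_T} \leq M$ (absorbable when $M$ is small) or with an explicit small power of $T$ (absorbable when $T$ is small). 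Uniqueness of the solution then follows automatically from the contraction.
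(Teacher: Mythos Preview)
Your proposal is correct and follows essentially the same strategy as the paper: a Banach fixed-point argument in the time-weighted spaces, with Lemma \ref{lemma} controlling $\tilde u$, Lemmas \ref{lemproduct}--\ref{lemfra} handling the nonlinear terms, the $p(v)-p(1)$ piece gaining the explicit factor $T^{1/2-\gamma}$, and the integral formula for $\tilde v$ gaining $T^\gamma$. The only organizational difference is that the paper iterates on $u$ alone---defining $v$ from the \emph{input} $w$ via $v(t)=v_0+\int_0^t w_x\,d\tau$ and seeking a fixed point of $w\mapsto u$ in $\mathcal{E}_{T,\eta}$---whereas you iterate on the pair $(v,u)$ with $\tilde v$ computed from the \emph{output} $\tilde u$. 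Both setups yield the same fixed point and the same estimates; the paper's version is slightly more economical since the $v$-difference $\|v_1-v_2\|_{Y_T}\lesssim T^\gamma\|w_1-w_2\|_T$ is available immediately in the contraction step rather than having to be closed as part of the pair.
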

\begin{remark}
We remark that Theorem \ref{thmisen} does not require the explicit formula $p(v)=Av^\nu, \ 1\leq \nu<e$. Indeed, we only need $p\in W^{3,\infty}([\frac{1}{2},\frac{3}{2}])$.
\end{remark}
\begin{proof}
For $T,\eta>0$, define the space 
\begin{align*}
	\mathcal{E}_{T,\eta}:=\{w:w(0,x)=u_0(x),\|w\|_T\leq \eta\}.
\end{align*}
To prove the existence of solution to \eqref{inns}, we construct the solution as a fixed point of the map $\mathcal{T}$, which is defined as follows.\\
Consider $w\in\mathcal{E}_{T,\eta}$, we define a map $\mathcal{T}w=u$, where $u$ is a solution to the equation 
\begin{align}\label{eqjum}
	\partial_tu-\mu\partial_x^2 u&=-(p(v)-p(1))_x+\mu\left(\left(\frac{1}{v}-1\right)\partial_x w\right)_x=:\tilde F(v,w)_x,
\end{align}
where $v$ is defined by 
\begin{align}\label{defv}
	v(t,x)=v_0(x)+\int_0^t \partial_x w(s,x)ds.
\end{align}
We claim that for any $c>0$, there exists $\varepsilon_0\in(0,1) $ such that if $M_1\leq\varepsilon_0$, then
\begin{align}
		& \|\mathcal{T}w\|_{T}\leq C_0(1+ cT^{{\gamma}})M_1,\quad\quad\quad\forall w \in \mathcal{E}_{T,cM_1},\label{es1}\\
		&\|\mathcal{T}w_1-\mathcal{T}w_2\|_{T}\leq  C_0(  T^\gamma +M_1)\|w_1-w_2\|_{T}, \quad\quad \ \forall w_1,w_2\in \mathcal{E}_{T,cM_1},\label{es2}
	\end{align}
	hold for any $T\in (0,1)$, and a constant $C_0$ independent of $c$.\\\\
We first prove \eqref{es1}. 
Let $w\in \mathcal{E}_{cM_1}$.	We first show the estimates of $v$.
By definition \eqref{defv}, it is easy to check that 
	\begin{align*}
		&\sup_{s\in[0,T]}\|v(s)-1\|_{L^1}\leq \|v_0-1\|_{L^1}+\int_0^T\|w_{x}(\tau)\|_{L^1}d\tau\leq \|v_0-1\|_{L^1}+2T^{\frac{1}{2}+\gamma}\|\partial_x w\|_{X_T^{\frac{1}{2}-\gamma,1}},\\
 	&\sup_{0<s<t<T}s^\alpha\frac{\|v(t)-v(s)\|_{L^1}}{(t-s)^\alpha}\lesssim \sup_{0<s<t<T}s^\alpha\frac{\int_s^t\|\partial_x w\|_{L^1}d\tau}{(t-s)^\alpha}\\
		&\quad\ \quad\quad\quad\quad\quad\quad\quad \ \ \ \ \ \ \ \ \ \ \ \lesssim \|\partial_x w\|_{X_T^{\frac{1}{2}-\gamma,1}}\sup_{s<t<T}s^\alpha \frac{t^{\gamma+\frac{1}{2}}-s^{\gamma+\frac{1}{2}}}{(t-s)^\alpha}\lesssim T^{\frac{1}{2}+\gamma}\|\partial_x w\|_{X_T^{\frac{1}{2}-\gamma,1}}.
	\end{align*}
	Similarly,
	\begin{align*}
		&\sup_{s\in[0,T]}\| v(s)\|_{BV}\leq 	\| v_0\|_{BV}+\int_0^T\|\partial_x w(\tau)\|_{BV} d\tau\leq	\|v_0\|_{BV}+\gamma^{-1} T^\gamma \|\partial_x w\|_{BV_T^{1-\gamma}},\\
		&\sup_{0<s<t<T}s^\alpha\frac{\|v(t)-v(s)\|_{BV}}{(t-s)^\alpha}\lesssim \sup_{0<s<t<T}s^\alpha\frac{\int_s^t\|\partial_x w\|_{BV}d\tau}{(t-s)^\alpha}\\
		&\quad\ \quad\quad\quad\quad\quad\quad\quad \ \ \ \ \ \ \ \ \ \ \ \lesssim \|\partial_x w\|_{BV_T^{1-\gamma}}\sup_{s<t<T}s^\alpha \frac{t^\gamma-s^\gamma}{(t-s)^\alpha}\lesssim T^\gamma\|\partial_x w\|_{BV_T^{1-\gamma}}.
	\end{align*}
	Hence we get 
	\begin{align*}
		&\|v-1\|_{X^{0,1}_T}\lesssim\|v_0-1\|_{L^1}+T^{\frac{1}{2}+\gamma}\|\partial_x w\|_{X_T^{\frac{1}{2}-\gamma,1}}\lesssim (1+cT^{\frac{1}{2}+\gamma})M_1,\\
		&\| v\|_{BV_T^{0}}\lesssim\| v_0\|_{BV}+T^\gamma \|\partial_x w\|_{BV_T^{1-\gamma}} \lesssim(1+cT^{\gamma})M_1.
	\end{align*}
	Moreover, using the fact that $\|f\|_{L^\infty}\leq \|f\|_{L^1}+\|f\|_{BV}$, we can obtain $$
	\|v-1\|_{X_T^{0,\infty}}\leq \|v-1\|_{X^{0,1}_T}+\| v\|_{BV_T^{0}}.
	$$
Then 
	\begin{align}\label{estofv}
	\|v-1\|_{Y_T}=	\|v-1\|_{X^{0,1}_T}+	\|v-1\|_{X_T^{0,\infty}}+\|v\|_{BV_T^0}\leq C_1 (1+cT^{\gamma})M_1.
	\end{align}	
	By taking $\varepsilon_0<\frac{1}{(10+C_1+c)^5}$, we have $\|v-1\|_{Y_T}\leq \frac{1}{2}$ provided $M_1\leq \varepsilon_0$.
	In particular, from the above estimate we have 
	$$\inf_{t\in[0,T]}\inf _{x\in\mathbb{R}}v(t,x)\geq \frac{1}{2},\quad\quad\left\|\frac{1}{v}\right\|_{L^\infty_{T,x}}\leq 2.$$ 
	Thanks to  Lemma \ref{lemma}, we have
	\begin{align}
		&\| \partial_x(\mathcal{T}w)\|_{X^{\frac{1}{2}-\gamma,1}_T}+\|\partial_x (\mathcal{T}w)\|_{BV_T^{1-\gamma}}+	\|\partial_x (\mathcal{T}w)\|_{X_T^{1-\gamma,\infty}}\nonumber\\
		&\quad\quad\lesssim \| u_0\|_{\dot W^{2\gamma,1}}+\|\tilde F(v,w)\|_{X^{\frac{1}{2}-\gamma,1}_T}+\|\tilde F(v,w)\|_{BV_T^{1-\gamma}}+\|\tilde F(v,w)\|_{X_T^{1-\gamma,\infty}}, \ \ \ \forall T>0.\label{haha}
	\end{align}
Note that  
\begin{align*}
	&\|\tilde F(v,w)\|_{\star}\lesssim \|p(v)-p(1)\|_{\star}+\left\|\left(\frac{1}{v}-1\right)\partial_x w\right\|_{\star},\ \ \quad\quad \star\in\{X^{\frac{1}{2}-\gamma,1}_T,X^{1-\gamma,\infty}_T,BV^{1-\gamma}_T\}.
\end{align*}
Applying Lemma \ref{lemfra} to obtain that 
\begin{align}
&\|p(v)-p(1)\|_{X^{\frac{1}{2}-\gamma,1}_T}+\|p(v)-p(1)\|_{X^{1-\gamma,\infty}_T}+\|p(v)-p(1)\|_{BV^{1-\gamma}_T}\nonumber\\
&\lesssim (T^{\frac{1}{2}-\gamma}+T^{1-\gamma})(\|p(v)-p(1)\|_{X^{0,1}_T}+\|p(v)-p(1)\|_{X^{0,\infty}_T}+\|p(v)-p(1)\|_{BV^{0}_T})\nonumber\\
&\lesssim T^{\frac{1}{2}-\gamma}\|v-1\|_{Y_T}(1+\|v-1\|_{Y_T})^2\overset{\eqref{estofv}}\lesssim T^{\frac{1}{2}-\gamma}(1+cT^{\gamma})M_1.\label{p1}
\end{align}
	Denote $q(t,x)=(\frac{1}{v(t,x)}-1)\partial_x w(t,x)$, by Lemma \ref{lemproduct} and \eqref{estofv} we have 
	\begin{align*}
&\|q\|_{X_T^{\frac{1}{2}-\gamma,1}}+\|q\|_{X_T^{1-\gamma,\infty}}+\|q\|_{BV_T^{1-\gamma,\infty}}\\
&\lesssim \left(\left\|\frac{1}{v}-1\right\|_{X^{0,\infty}_T}+\left\|\frac{1}{v}-1\right\|_{BV^{0}_T}\right)(\|\partial_xw\|_{X_T^{\frac{1}{2}-\gamma,1}}+\|\partial_xw\|_{X_T^{1-\gamma,\infty}}+\|\partial_x w\|_{BV^{1-\gamma}_T})\\
&\lesssim \|v-1\|_{Y_T}(1+\|v-1\|_{Y_T})\|\partial_xw\|_{T}\lesssim c(1+cT^{\gamma})M_1^2.
	\end{align*}
	Combining this with \eqref{p1} to obtain that 
	\begin{align}
	&	\|\tilde F(v,w)\|_{X_T^{\frac{1}{2}-\gamma,1}}+	\|\tilde F(v,w)\|_{X_T^{1-\gamma,\infty}}+	\|\tilde F(v,w)\|_{BV_T^{1-\gamma}}\nonumber\\
		&\lesssim  (T^{\frac{1}{2}-\gamma}+cM_1)(1+cT^{\gamma})M_1\lesssim (1+cT^{\gamma})M_1.\label{tilF}
	\end{align}
	Here we use the fact that $T^{\frac{1}{2}-\gamma}+cM_1\leq T^{\frac{1}{2}-\gamma}+c\eps_0\lesssim 1$.
	We conclude from   \eqref{haha} and \eqref{tilF}  that 
	\begin{align*}
		\|\mathcal{T}w\|_{T}&\lesssim(1+cT^{\gamma})M_1,\ \ \ \ \ \forall w\in \mathcal{E}_{T,cM_1}.
	\end{align*}
This completes the proof of  \eqref{es1}. Then we prove \eqref{es2}. 
	 Consider $w_1,w_2\in 	\mathcal{E}_{T,cM_1}$, let $$v_m(t,x)=v_0(x)+\int_0^t \partial_x w_m(s,x)ds,\ \ \ m=1,2.$$ Then it is easy to check that both $v_1$ and $v_2$ satisfy \eqref{estofv}, and 
	\begin{equation}\label{difv}
	   	\begin{aligned}
	&\|v_1-v_2\|_{ Y_T}=\|v_1-v_2\|_{X_T^{0,\infty}}+\|v_1-v_2\|_{X_T^{0,1}}+\|v_1-v_2\|_{BV_T^{0}}\\
	&\lesssim T^{\frac{1}{2}+\gamma}\|\partial_x (w_1-w_2)\|_{X_T^{\frac{1}{2}-\gamma,1}}+T^\gamma \|\partial_x (w_1-w_2)\|_{X_T^{1-\gamma,\infty}}+ T^\gamma\|\partial_x (w_1-w_2)\|_{BV_T^{1-\gamma}}\\
	&\lesssim T^\gamma \|w_1-w_2\|_T.
	\end{aligned}
	\end{equation}
	We have the equation 
	\begin{align*}
		\partial_t(\mathcal{T}w_1-	\mathcal{T}w_2)-\mu\partial_x^2 (\mathcal{T}w_1-	\mathcal{T}w_2)&=(\tilde F(v_1,w_1)-\tilde F(v_2,w_2))_x.
	\end{align*}
By Lemma \ref{lemma}, we obtain 
	\begin{align}\label{contra}
		\|\mathcal{T}w_1-	\mathcal{T}w_2\|_{T}\lesssim& \|\tilde F(v_1,w_1)-\tilde F(v_2,w_2)\|_{X_T^{1-\gamma,\infty}}+ \|\tilde F(v_1,w_1)-\tilde F(v_2,w_2)\|_{BV_T^{1-\gamma}}\nonumber\\
		&+\|\tilde F(v_1,w_1)-\tilde F(v_2,w_2)\|_{X_T^{\frac{1}{2}-\gamma,1}}.
	\end{align}
	Note that 
	\begin{align*}
		\tilde	F(v_1,w_1)-\tilde F(v_2,w_2)=p(v_2)-p(v_1)+\mu \left(\left(\frac{1}{v_1}-1\right)\partial_xw_1- \left(\frac{1}{v_2}-1\right)\partial_xw_2\right).
	\end{align*}
	We first estimate $p(v_2)-p(v_1)$. Applying Lemma \ref{lemproduct} and \eqref{difv} to obtain  that 
	\begin{align}
	   &\|p(v_2)-p(v_1)\|_{X_T^{\frac{1}{2}-\gamma,1}}+\|p(v_2)-p(v_1)\|_{X_T^{1-\gamma,\infty}}+\|p(v_2)-p(v_1)\|_{BV_T^{1-\gamma}}\nonumber\\
	   &\quad\quad\lesssim T^{\frac{1}{2}-\gamma} (\|p(v_2)-p(v_1)\|_{X^{0,1}_T}+\|p(v_2)-p(v_1)\|_{X^{0,\infty}_T}+\|p(v_2)-p(v_1)\|_{BV^{0}_T})\nonumber\\
	   &\quad\quad\lesssim T^{\frac{1}{2}-\gamma}\|v_1-v_2\|_{ Y_T}(1+\|v_1-1\|_{Y_T}+\|v_2-1\|_{Y_T})^2\lesssim T^\frac{1}{2}\|w_1-w_2\|_T.\label{difp}
	\end{align}
	Then we estimate $\mathcal{Q}=\left(\frac{1}{v_1}-1\right)\partial_xw_1- \left(\frac{1}{v_2}-1\right)\partial_xw_2$. We have 
	\begin{align*}
		\mathcal{Q}=\left(\frac{1}{v_1}-\frac{1}{v_2}\right)\partial_xw_1+\left(\frac{1}{v_2}-1\right)\partial_x(w_1-w_2).
	\end{align*}
By Lemma \ref{lemproduct} and Lemma \ref{lemfra},
\begin{align}
&\|\mathcal{Q}\|_{X^{1-\gamma,\infty}_T}+\|\mathcal{Q}\|_{X^{\frac{1}{2}-\gamma,1}_T}+\|\mathcal{Q}\|_{BV^{1-\gamma}_T}\nonumber\\
&\lesssim \left(\left\|\frac{1}{v_1}-\frac{1}{v_2}\right\|_{X^{0,\infty}_T}+\left\|\frac{1}{v_1}-\frac{1}{v_2}\right\|_{BV^{0}_T}\right)\|w_1\|_T+\left(\left\|\frac{1}{v_2}-1\right\|_{X^{0,\infty}_T}+\left\|\frac{1}{v_2}-1\right\|_{BV^{0}_T}\right)\|w_1-w_2\|_T\nonumber\\
&\lesssim \|v_1-v_2\|_{ Y_T}(1+\|v_1-1\|_{Y_T}+\|v_2-1\|_{ Y_T})^2\|w_1\|_T+\|v_2-1\|_{Y_T}(1+\|v_2-1\|_{Y_T})^2\|w_1-w_2\|_T\nonumber\\
&\lesssim \|w_1-w_2\|_T(1+cT^\gamma)M_1,\label{Q}
\end{align}
where the last inequality follows from \eqref{difv}, and the facts  that  $\|v_2-1\|_{Y_T}\lesssim (1+cM_1)T^\gamma, \|w_1\|_T\leq cM_1$.
	We conclude from \eqref{contra}, \eqref{difp} and \eqref{Q} that 
	\begin{align*}
		\|\mathcal{T}w_1-	\mathcal{T}w_2\|_{T}\lesssim(  T^\frac{1}{2}+(1+cT^\gamma)M_1)\|w_1-w_2\|_{T}\lesssim (T^\gamma+M_1)\|w_1-w_2\|_{T}.
	\end{align*}
	This completes the proof of\eqref{es2}.\\\\
	Without loss of generality, we further assume $0<\varepsilon_0\leq \frac{1}{C_0(10+c)^2}$.	With the claim \eqref{es1} and \eqref{es2}, we can take $c_1=10C_0$ and  $0<T<(10+C_0)^{-\frac{10}{\gamma}}$ to obtain  that $$
	\|\mathcal{T}w\|_{T}\leq c_1M_1,\ \ \ \|\mathcal{T}w_1-\mathcal{T}w_2\|_{T}\leq \frac{1}{2}\|w_1-w_2\|_{T},\ \ \ \text{for}\ w,w_1,w_2\in \mathcal{E}_{T,c_1M_1}.
	$$
	Then by classical Bananch fixed point theorem, the map $\mathcal{T}$ has a unique fixed point $u\in\mathcal{E}_{T,c_1M_1}$. Let $v(t)=v_0+\int_0^t u_x(\tau)d\tau$. Clearly, $(v,u)$ solves system \eqref{inns} in $[0,T]$ with initial data $(v_0,u_0)$. The estimate \eqref{rethm3} follows directly from \eqref{estofv} and the fact that $u\in\mathcal{E}_{T,c_1M_1}$. This completes the proof.
\end{proof}\vspace{0.3cm}\\
\begin{proof}[Proof of Theorem \ref{thm1}]
From \eqref{rethm3} in  Theorem \ref{thmisen} we obtain that for initial data satisfying \eqref{glocon1}, there exists a unique solution $(v,u)$ in $[0,T]$ such that 
\begin{align}
\sup_{t\in[0,T]}(\|v(t)-1\|_{L^1\cap L^\infty\cap BV}+t^{\frac{1}{2}-\gamma}\|u(t)\|_{ BV}+t^{1-\gamma}\|u_x(t)\|_{L^\infty})\lesssim M_1.\label{re11}
\end{align}
Moreover, using the formula 
\begin{align*}
u(t)=\mathbf{K}(t)\ast u_0+\int_0^t \mathbf{K}(t-\tau)\ast \tilde F(v,u)(\tau)d\tau,
\end{align*}
we obtain that \begin{align*}
\sup_{t\in[0,T]}\|u(t)\|_{L^1}&\lesssim \|u_0\|_{L^1}+\int_0^t (t-\tau)^{-\frac{1}{2}}\|\tilde F(v,u)(\tau)\|_{L^1}d\tau\lesssim M_1+T^\gamma\|\tilde F(v,u)\|_{X_T^{\frac{1}{2}-\gamma,1}}\nonumber\\
&\overset{\eqref{tilF}}\lesssim (1+c_1T^\gamma)M_1\lesssim M_1,\\
\sup_{t\in[0,T]}t^\frac{1}{2}\|u(t)\|_{L^\infty}&\lesssim \sup_{t\in[0,T]}t^\frac{1}{2}\|\mathbf{K}(t)\|_{L^\infty}\|u_0\|_{L^1}+\sup_{t\in[0,T]}t^\frac{1}{2}\int_0^t\|\tilde F(v,u)(\tau)\|_{L^\infty}d\tau\\
&\lesssim \|u_0\|_{L^1}+t^{\frac{1}{2}+\gamma} \|\tilde F(v,u)\|_{X_T^{1-\gamma,\infty}}\overset{\eqref{tilF}}\lesssim M_1.
\end{align*}
Combining this with \eqref{re11}, we get 
\begin{align}\label{re13}
&\|(v(t)-1,u(t))\|_{L^1\cap BV}+\sqrt{t+1}\|(v(t)-1,u(t))\|_{L^\infty}+\sqrt{t}\|u_x(t)\|_{L^\infty}\lesssim  t^{-\frac{1}{2}}M_1,\ \ \ \ \forall t\in[0,T].
\end{align}
We can take $\varepsilon_0$ small enough such that 
$$
\|(v({T}/{2})-1,u({T}/{2}))\|_{L^1\cap BV}\lesssim T^{-\frac{1}{2}}M_1\lesssim T^{-\frac{1}{2}}\varepsilon_0\ll 1,
$$
satisfies the condition \eqref{theircon}. By \cite[Theorem 1.4]{LiuCAPM}, the Cauchy problem of system \eqref{inns} with initial data $(\tilde v_0,\tilde u_0)=(v(T/2),u(T/2))$ has a unique global solution $(\tilde v,\tilde u)$ such that $(\tilde v,\tilde u)|_{t=0}=(v(T/2),u(T/2))$, and 
   \begin{align}\label{re12}
&\|(\tilde v(t)-1,\tilde u(t))\|_{L^1\cap BV}+\sqrt{t+1}\|(\tilde v(t)-1,\tilde u(t))\|_{L^\infty}+\sqrt{t}\|\tilde u_x(t)\|_{L^\infty}\lesssim M_1,\ \ \ \ \forall t>0.
\end{align}
Due to uniqueness, we have $(\tilde v,\tilde u)(t)=(v,u)(t+T/2)$. By this way we extend the solution $(v,u)$ globally in time. By \eqref{re12}, we have 
\begin{align*}
&\|(v(t)-1,u(t))\|_{L^1\cap BV}+\sqrt{t+1}\|( v(t)-1, u(t))\|_{L^\infty}+\sqrt{t-T/2}\| u_x(t)\|_{L^\infty}\lesssim M_1,\ \ \ \ \forall t>T/2.
\end{align*}
Combining this with \eqref{re13}, we obtain  \eqref{esthm1}. 
This completes the proof.
\end{proof}
\begin{remark}
From the above proof, we can see that it is enough to put $(1+\frac{1}{\sqrt{t}})M_1$ in the right hand side of \eqref{esthm1}.
\end{remark}
\section{Well-posedness for the full Navier–Stokes system }
	We define the following norms and spaces
	\begin{align*}
		&\|u\|_{Z_T^1}=\sum_{\star\in\{L_{T,x}^2,X_T^{\frac{1}{2},2},X_T^{\frac{3}{4},\infty},X_T^{\frac{1}{2}-\gamma,1},BV_T^{1-\gamma}\}}\|u_x\|_{\star},\\
		&	\|\theta\|_{Z_T^2}=\sum_{\star\in\{L_{T,x}^2,X_T^{\frac{1}{2},2},X_T^{\frac{1}{2}-\gamma,1}\}}\|\theta\|_{\star}+\sum_{\star\in\{L_{T,x}^{\frac{6}{5}},X_T^{\frac{5}{6},\frac{6}{5}},X_T^{1-\gamma,1}\}}\|\theta_x\|_{\star}
		,\\
		&\mathcal{G}_{T,\eta}=\{(u,\theta)|(u(0,\cdot),\theta(0,\cdot))=(u_0,\theta_0), \|(u,\theta)\|_{Z_T}=\|u\|_{Z_T^1}+\|\theta-1\|_{Z_T^2}\leq\eta\}.
	\end{align*}
We want to prove the following main theorem in this section. 
	\begin{theorem}	\label{thmfull} There exist $\varepsilon_1,T\in(0,1)$ such that, for  any initial data $(v_0,u_0,\theta_0)$  satisfying
		\begin{equation}
			M_2:=\|v_0-1\|_{L^1 \cap BV}+\|u_0\|_{W^{2\gamma,1}\cap L^2}+\|\theta_0-1\|_{\dot W^{-\frac{2}{3},\frac{6}{5}}\cap \dot W^{2\gamma-1}}\leq\eps_1,
		\end{equation}
		the system \eqref{cpns} admits a unique solution $(v,u,\theta)$ satisfying
		\begin{equation}\label{thm4.1}
			\|v-1\|_{Y_T}+\|u\|_{Z_T^1}+\|\theta-1\|_{Z_T^2}\lesssim M_2.
		\end{equation}
	\end{theorem}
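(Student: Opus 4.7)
The plan is to mirror the proof of Theorem \ref{thmisen}, running a Banach fixed-point scheme on the space $\mathcal{G}_{T,c_1 M_2}$ for the pair $(u,\theta)$. Given $(w,\vartheta)\in\mathcal{G}_{T,c_1 M_2}$, I first set $v(t,x) = v_0(x) + \int_0^t \partial_x w(s,x)\,ds$; the calculations in Section \ref{jum} carry over verbatim to give $\|v-1\|_{Y_T}\lesssim (1+c_1 T^\gamma)M_2$ and $\inf v \geq 1/2$ as soon as $\varepsilon_1$ is small. The fixed-point map $\mathcal{T}:(w,\vartheta)\mapsto(u,\theta)$ is then defined by solving the two linear parabolic problems
\begin{align*}
&\partial_t u - \mu\partial_x^2 u = \partial_x F_u,&& F_u := -\bigl(P(v,\vartheta)-K\bigr)+\mu\Bigl(\tfrac{1}{v}-1\Bigr)\partial_x w,\\
&\partial_t \theta - \tfrac{\kappa}{\mathbf{c}}\partial_x^2\theta = \partial_x F_\theta + R_\theta,&& F_\theta := \tfrac{\kappa}{\mathbf{c}}\Bigl(\tfrac{1}{v}-1\Bigr)\partial_x\vartheta,\quad R_\theta := -\tfrac{P(v,\vartheta)}{\mathbf{c}}\partial_x w + \tfrac{\mu}{\mathbf{c} v}(\partial_x w)^2.
\end{align*}

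For the velocity I will invoke the estimates \eqref{u1}, \eqref{l2}, \eqref{BV} to control all five norms composing $\|u\|_{Z_T^1}$, estimating the source $F_u$ via Lemmas \ref{lemproduct} and \ref{lemfra} after the splitting $P(v,\vartheta)-K = K(\vartheta-1)/v - K(v-1)/v$. For the temperature I will invoke \eqref{the1} and \eqref{l1}, which are designed precisely to accept the rough initial data $\theta_0-1\in\dot W^{-\frac{2}{3},\frac{6}{5}}\cap\dot W^{2\gamma-1,1}$ and to produce the six norms composing $\|\theta-1\|_{Z_T^2}$. The divergence-form source $F_\theta$ and the bilinear remainder $\tfrac{P}{\mathbf{c}}\partial_x w$ are again handled by Lemmas \ref{lemproduct} and \ref{lemfra}, each time gaining a small power of $T$ or a factor of $M_2$.

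The hard part will be the quadratic source $(\partial_x w)^2$ in $R_\theta$, which must be controlled in $L^1_{T,x}\cap X_T^{1,1}$ — the exact norm appearing on the right-hand side of \eqref{the1} and \eqref{l1}. This is precisely why the space $Z_T^1$ carries $\|w_x\|_{L^2_{T,x}}$ and $\|w_x\|_{X_T^{\frac{1}{2},2}}$ among its five components: H\"older in $x$ immediately gives $\|(w_x)^2\|_{L^1_{T,x}} \leq \|w_x\|_{L^2_{T,x}}^2$; the time-weighted bound follows from $t\|w_x(t)\|_{L^2}^2\lesssim \|w\|_{Z_T^1}^2$; and the H\"older-in-time piece expands via the identity $f(t)^2-f(s)^2 = (f(t)+f(s))(f(t)-f(s))$ placed in $L^2\cdot L^2\hookrightarrow L^1$, yielding $s^{1+\alpha}(t-s)^{-\alpha}\|(w_x(t))^2-(w_x(s))^2\|_{L^1}\lesssim \|w\|_{Z_T^1}^2$. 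Combined with the $T^{\frac{1}{4}}$ and $T^{\frac{1}{2}-\gamma}$ prefactors attached to the remainder in \eqref{the1}, \eqref{l1}, this gives $\|R_\theta\|_{L^1_{T,x}\cap X_T^{1,1}}\lesssim T^\delta\|w\|_{Z_T^1}(\|w\|_{Z_T^1}+\|\vartheta-1\|_{Z_T^2}+1)$, which is small for $T$ small.

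Assembling these bounds should produce $\|\mathcal{T}(w,\vartheta)\|_{Z_T}\leq C_0(1+c_1 T^\gamma)M_2$ together with the difference estimate $\|\mathcal{T}(w_1,\vartheta_1)-\mathcal{T}(w_2,\vartheta_2)\|_{Z_T}\leq C_0(T^\gamma+M_2)\|(w_1-w_2,\vartheta_1-\vartheta_2)\|_{Z_T}$ by a parallel calculation (the quadratic term contributes $(\partial_x w_1+\partial_x w_2)\partial_x(w_1-w_2)$, handled by the same $L^2\cdot L^2\hookrightarrow L^1$ trick), in direct analogy with \eqref{es1}--\eqref{es2}. Choosing $c_1=10 C_0$, then $T$ small, then $\varepsilon_1$ small in the same order as in Theorem \ref{thmisen}, the Banach fixed point theorem delivers a unique fixed point $(u,\theta)\in\mathcal{G}_{T,c_1 M_2}$; setting $v(t)=v_0+\int_0^t u_x(\tau)\,d\tau$ then produces the required solution of \eqref{cpns} satisfying \eqref{thm4.1}.
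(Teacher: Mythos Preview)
Your overall architecture is right and your treatment of $R_\theta$ and $F_\theta$ matches the paper's. The gap is in how you couple the two equations in the fixed-point map. You put $\vartheta$ in the source $F_u=-\bigl(P(v,\vartheta)-K\bigr)+\mu(\tfrac{1}{v}-1)\partial_x w$. When you take differences, $\bar F_u$ then contains the term $-K\,\bar\vartheta/v_1$, and for each $\star\in\{L^2_{T,x},X_T^{\frac12,2},X_T^{\frac34,\infty},X_T^{\frac12-\gamma,1},BV_T^{1-\gamma}\}$ you only get
\[
\left\|\frac{\bar\vartheta}{v_1}\right\|_{\star}\ \lesssim\ \|\bar\vartheta\|_{\star}\ \lesssim\ \|\bar\vartheta\|_{Z_T^2},
\]
with \emph{no} factor of $T^\gamma$ or $M_2$ in front. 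None of the six pieces of $Z_T^2$ lets you recover smallness here (for instance $\|\bar\vartheta\|_{L^2_{T,x}}$ is one of the components of $\|\bar\vartheta\|_{Z_T^2}$ itself). Hence the contraction bound you claim, $C_0(T^\gamma+M_2)\|(\bar w,\bar\vartheta)\|_{Z_T}$, cannot hold for your map, and the Banach argument does not close.

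The paper avoids this by a sequential coupling: given $(w,\vartheta)$ it first solves the $\theta$-equation to obtain $\theta=\mathcal{T}_2(w,\vartheta)$, and then feeds this $\theta$ (not $\vartheta$) into the pressure in the $u$-equation, i.e.\ $G=-K(\tfrac{\theta}{v}-1)+\mu(\tfrac{1}{v}-1)w_x$. The difference $\bar G$ then contains $\tfrac{1}{v_2}\bar\theta$ rather than $\tfrac{1}{v_2}\bar\vartheta$, and since the $\theta$-step already yields $\|\bar\theta\|_{Z_T^2}\lesssim cM_2(1+T^\gamma)\|(\bar w,\bar\vartheta)\|_{Z_T}$, the missing small factor is inherited. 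With this single modification your outline goes through essentially as written; the rest of your estimates (for $v$, for $R_\theta$ via the $L^2\cdot L^2\hookrightarrow L^1$ trick, and for $F_\theta$) line up with the paper's argument.
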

	\begin{proof} As in the proof of Theorem \ref{thmisen}, we construct a solution $(v,u,\theta)$ of  the system \eqref{cpns}  as a fixed point of the following  map.
		For $(w,\vartheta)\in\mathcal{G}_{T,\eta}$,	we define the  map
		\begin{equation*}
			(u,\theta)=\mathcal{T}(w,\vartheta),
		\end{equation*}
		where  $(u,\theta)$ solves the following equation
		\begin{equation}\label{fnsfml}
			\begin{aligned}
				&	u_t-\mu u_{xx}+(P(v,\theta))_x=\mu\left(\left(\frac{1}{v}-1\right)\partial_x w\right)_x,\ \ \ \ 
				u(0,x)=u_0(x),
				\\
				&		\theta_{t}-\frac{\kappa}{\mathbf{c}}\theta_{xx}=-\frac{P(v,\vartheta)}{\mathbf{c}} w_{x}+\frac{\mu}{\mathbf{c} v}\left(w_{x}\right)^{2}+\frac{\kappa}{\mathbf{c}}\left(\left(\frac{1}{v}-1\right)\partial_x \vartheta\right)_x,\ \ \ \theta(0,x)=\theta_0(x),
			\end{aligned}
		\end{equation}
		where $P(v,\theta)=\frac{K\theta}{v}$ and $v$ is fixed by
		\begin{equation}\label{fnsv}
			v(t,x)=v_0(x)+\int_0^tw_x(s,x)ds.
		\end{equation}
		We can write the formula for $u$ and $\theta$ by \eqref{fnsfml}, 
		\begin{align*}
			&\theta(t)=\mathcal{T}_2(w,\vartheta)(t)=\mathbf{K}(t)\ast \theta_0+\int_0^t\mathbf{K}(t-s)\ast R(s)ds +\int_0^t\mathbf{K}(t-s)\ast \partial_xF(s)ds,\\
			&		u(t)=\mathcal{T}_1(w,\theta)(t)=\mathbf{K}(t)\ast u_0+\int_0^t\mathbf{K}(t-s)\ast \partial_xG(s)ds,
		\end{align*}
		where $$R=-\frac{K}{\mathbf{c}}\frac{\vartheta  w_x}{v}+\frac{\mu}{\mathbf{c} v}w_{x}^2,\ \ \ \  F=\frac{\kappa}{\mathbf{c}}(\frac{1}{v}-1)\vartheta_x,\ \ \ \  G=-K(\frac{\theta}{v}-1)+\mu(\frac{1}{v}-1)w_x.$$
		We will prove that for any $c>0$, there exists $\eps_1>0$ such that for any $M_2<\eps_1$,
		\begin{equation}\label{kpnorm}
			\|\mathcal{T}(w,\vartheta)\|_{Z_T}\leq C_0(1+cT^{\gamma})M_2,\quad \forall (w,\vartheta)\in \mathcal{G}_{T,cM_2},
		\end{equation}
		\begin{equation}\label{commap}
			\|\mathcal{T}(w_1,\vartheta_1)-\mathcal{T}(w_2,\vartheta_2)\|_{{Z}_T}\leq C_0cM_2(1+T^{\gamma})\|(w_1-w_2,{\vartheta}_1-\vartheta_2)\|_{{Z}_T},\ \forall (w_i,\vartheta_i)\in\mathcal{G}_{T,cM_2},\  i=1,2,
		\end{equation}
		for any $T\in(0,1)$, and $C_0$ independent of $c$.\\
		First we prove \eqref{kpnorm}. Similar as in the proof of Theorem \ref{thmisen}, we can prove that there exists constant $M>0$ such that 
		\begin{equation}\label{fnsv1}
			\|v-1\|_{Y_T}\leq C_1(M_2+T^\gamma\|w\|_{Z_T^1})\leq C_1(1+cT^\gamma)M_2.
		\end{equation}
		Then we obtain $\|v-1\|_{Y_T}\leq \frac{1}{2}$ by taking $\eps_1<\frac{1}{(10+C_1+c)^5}$ such that $C_1(1+c)\eps_1\leq \frac{1}{2}$.\\
		For $\theta$, we consider the equation of $\theta-1$ and we have that, by Lemma \ref{lemma},
		\begin{equation}\label{fnstheta1}
		\begin{aligned}
			&\|\theta-1\|_{L_{T,x}^2\cap X_T^{\frac{1}{2},2}\cap X_T^{\frac{1}{2}-\gamma,1}}+\|\partial_x\theta\|_{L_{T.x}^\frac{6}{5}\cap X_T^{\frac{5}{6},\frac{6}{5}}\cap X_T^{1-\gamma,1}}\\
			&\lesssim \|\theta_0-1\|_{\dot W^{-\frac{2}{3},\frac{6}{5}}\cap \dot W^{2\gamma-1,1} }+T^{\frac{1}{4}}\|R\|_{L_{T,x}^1\cap X_T^{1,1}}+\|F\|_{L^{\frac{6}{5}}_{T,x}\cap X_T^{\frac{5}{6},\frac{6}{5}}\cap X_T^{1-\gamma,1}}.
		\end{aligned}
		\end{equation}
			For $R$, we can write as
		\begin{equation*}
			R=\frac{\mu}{\mathbf{c}v}w_x^2-\frac{Kw_x}{\mathbf{c}v}-\frac{K}{\mathbf{c}}\frac{\vartheta-1}{v}w_x.
		\end{equation*}
	By H\"{o}lder's inequality, it is easy to check that 
		\begin{align*}
			&\|R\|_{L_{T,x}^1}\lesssim \|w\|_{L_{T,x}^2}^2+\|w_x\|_{L_{T,x}^1}+\|\vartheta-1\|_{L_{T,x}^2}\|w_x\|_{L_{T,x}^2}\lesssim c(T^{\gamma+\frac{1}{2}}+cM_2)M_2,\\
		&	\sup_{t\in[0,T]}t\|R(t)\|_{L^1}\lesssim\|w_x\|_{X_T^{\frac{1}{2},2}}^2+T^{\frac{1}{2}+\gamma}\|w_x\|_{X_T^{\frac{1}{2}-\gamma,1}}+\|\vartheta-1\|_{X_T^{\frac{1}{2},2}}\|w_x\|_{X_T^{\frac{1}{2},2}}\lesssim c(T^{\frac{1}{2}+\gamma}+cM_2)M_2.
		\end{align*}
		\begin{equation*}
			\begin{aligned}
				&\sup_{s<t<T}s^{1+\alpha}\frac{\|R(t)-R(s)\|_{L^1}}{(t-s)^\alpha}\\
				\lesssim& \sup_{s<t<T}s^{1+\alpha}\frac{\|w_x(t)-w_x(s)\|_{L^2}(\|w_x(t)\|_{L^2}+\|w_x(s)\|_{L^2})}{(t-s)^\alpha}+\sup_{s<t<T}s^{1+\alpha}\|w_x(s)\|_{L^2}^2\frac{\|v(t)-v(s)\|_{L^\infty}}{(t-s)^{\alpha}}\\
				&+\sup_{s<t<T}s^{1+\alpha}\frac{\|w_x(t)-w_x(s)\|_{L^1}}{(t-s)^\alpha}+\sup_{s<t<T}s^{1+\alpha}\|w_x(s)\|_{L^\infty}\frac{\|v(t)-v(s)\|_{L^1}}{(t-s)^\alpha}\\
				&+\sup_{s<t<T}s^{1+\alpha}\|w_x(t)\|_{L^2}\frac{\|\vartheta(t)-\vartheta(s)\|_{L^2}}{(t-s)^{\alpha}}+\sup_{s<t<T}s^{1+\alpha}\|\vartheta(s)-1\|_{L^2}\frac{\|w_x(t)-w_x(s)\|_{L^2}}{(t-s)^{\alpha}}\\
				&+\sup_{s<t<T}s^{1+\alpha}\|w_x(s)\|_{L^2}\|\vartheta(s)-1\|_{L^2}\frac{\|v(t)-v(s)\|_{L^\infty}}{(t-s)^{\alpha}}\\
				\lesssim& c(T^{\frac{1}{2}+\gamma}+cM_2)M_2.
			\end{aligned}
		\end{equation*}
The above estimates yield
		\begin{equation}\label{fnsrem1}
			\begin{aligned}
				\|R\|_{L_{T,x}^1\cap X_T^{1,1}}\lesssim c(T^{\frac{1}{2}+\gamma}+cM_2)M_2.
			\end{aligned}
		\end{equation}
		Applying Lemma \ref{lemproduct} with $g=\frac{1}{v}-1$ and $h=\vartheta_x$ to obatin 
		\begin{align*}
			\|F\|_{X^{\frac{5}{6},\frac{6}{5}}_T\cap X_T^{1-\gamma,1}}&\lesssim \left\|\frac{1}{v}-1\right\|_{X^{0,\infty}_T}\|\vartheta_x\|_{X^{\frac{5}{6},\frac{6}{5}}_T\cap X_T^{1-\gamma,1}}\lesssim \|v-1\|_{X^{0,\infty}_T}\|\vartheta_x\|_{X^{\frac{5}{6},\frac{6}{5}}_T\cap X_T^{1-\gamma,1}}\\
			&\lesssim \|v-1\|_{Y_T}\|\vartheta-1\|_{Z_T^2}.
		\end{align*}
		Moreover, by Holder's inequality,
		\begin{equation}\label{fnsthetaf1}
			\|F\|_{L^\frac{6}{5}_{T,x}}\lesssim \|v-1\|_{L_{T,x}^\infty}\|\vartheta_x\|_{L_{T,x}^{\frac{6}{5}}}\lesssim \|v-1\|_{Y_T}\|\vartheta-1\|_{Z_T^2}.
		\end{equation}
		Hence,
		\begin{equation}\label{fnsthetaf}
	\|F\|_{L^{\frac{6}{5}}_{T,x}\cap X_T^{\frac{5}{6},\frac{6}{5}}\cap X^{1-\gamma,1}_T}\lesssim \|v-1\|_{Y_T}\|\vartheta-1\|_{Z_T^2}\overset{\eqref{fnsv1}}\lesssim c(1+cT^\gamma)M_2^2.
		\end{equation}
		So we substitute \eqref{fnsrem1},  \eqref{fnsthetaf} into \eqref{fnstheta1} to have the estimate
		\begin{align}\label{fnstheta}
			\|\theta-1\|_{Z^2_T}
			\lesssim \|\theta_0-1\|_{\dot W^{-\frac{2}{3},\frac{6}{5}}\cap \dot W^{2\gamma-1,1}}+(1+cT^\gamma)M_2\lesssim (1+cT^\gamma)M_2.
		\end{align}
		Then we estimate $u$,  apply Lemma \ref{lemma} again to $u$ to get that
		\begin{equation}\label{fnsu1}
		\|\partial_xu\|_{\star}\lesssim \|u_0\|_{L^2\cap \dot W^{2\gamma,1}}+\|G\|_{\star},\ \ \ \ \ \ \ \star\in { \{L_{T,x}^2,X_T^{\frac{1}{2},2},X_T^{\frac{3}{4},\infty},X_T^{\frac{1}{2}-\gamma,1},BV^{1-\gamma}_T\}}.
		\end{equation}
Recall that 
		\begin{equation*}
			G=-K(\frac{\theta}{v}-1)+\mu(\frac{1}{v}-1)w_x=G_1+G_2.
		\end{equation*}
	For $G_1$, by Lemma \ref{lemproduct} we have, for $\star\in { \{L_{T,x}^2,X_T^{\frac{1}{2},2},X_T^{\frac{3}{4},\infty},X_T^{\frac{1}{2}-\gamma,1},BV^{1-\gamma}_T\}}$,
		\begin{equation*}
			\begin{aligned}
				\|G_1\|_{\star}&\lesssim \left\|\frac{\theta-1}{v}\right\|_{\star}+ \left\|\frac{v-1}{v}\right\|_{\star}\lesssim (\|\theta-1\|_{\star}+\|v-1\|_{\star})(1+\|v-1\|_{Y_T}).
			\end{aligned}
		\end{equation*}
	From	the interpolation $\|\theta-1\|_{X_T^{\frac{3}{4},\infty}}\lesssim \|\theta-1\|_{X_T^{\frac{1}{2},2}}+\|\theta_x\|_{X_T^{\frac{5}{6},\frac{6}{5}}}\lesssim \|\theta-1\|_{Z_T^2}$, and the fact that $\|\theta-1\|_{BV^{1-\gamma}_T}\lesssim \|\theta_x\|_{X_T^{1-\gamma,1}}\lesssim \|\theta-1\|_{Z_T^2}$, we obtain that $\|\theta-1\|_{\star}\lesssim \|\theta-1\|_{Z_T^2}$. Moreover, it is easy to check that $\|v-1\|_{\star}\lesssim \|v-1\|_{Y_T}$, for any $\star\in { \{L_{T,x}^2,X_T^{\frac{1}{2},2},X_T^{\frac{3}{4},\infty},X_T^{\frac{1}{2}-\gamma,1},BV^{1-\gamma}_T\}}$. Combining this with \eqref{fnsv1} and \eqref{fnstheta} to obtain that 
	\begin{align}\label{G1}
	   				\sum_{\star\in { \{L_{T,x}^2,X_T^{\frac{1}{2},2},X_T^{\frac{3}{4},\infty},X_T^{\frac{1}{2}-\gamma,1},BV^{1-\gamma}_T\}}}\|G_1\|_{\star}
	   				&\lesssim (1+c T^\gamma)M_2.
	\end{align}
		For $G_2$, apply Lemma \ref{lemproduct} again to obtain 
		\begin{align*}
			&\|G_2\|_{L_{T,x}^2}\lesssim \|v-1\|_{L_{T,x}^\infty}\|w_x\|_{L_{T,x}^2}\lesssim c(1+c T^\gamma)M_2^2,
			\\
			&\|G_2\|_{\star}\lesssim \|v-1\|_{Y_T}\|w_x\|_{\star}\lesssim c(1+c T^\gamma)M_2^2,\ \ \ \  \star \in\{X_T^{\frac{1}{2},2}, X_T^{\frac{3}{4},\infty},X_T^{\frac{1}{2}-\gamma,1},BV^{1-\gamma}_T\}.
		\end{align*}
		Hence,
		\begin{equation}\label{fnsg}
			\begin{aligned}
				\sum_{\star\in \{L_{T,x}^2,X_T^{\frac{1}{2},2},X_T^{\frac{3}{4},\infty},X_T^{\frac{1}{2}-\gamma,1},BV^{1-\gamma}_T\}}\|G_2\|_{\star}
				\lesssim (1+c T^\gamma)M_2.			\end{aligned}
		\end{equation}
		Substitute \eqref{G1} and \eqref{fnsg} into \eqref{fnsu1}, one has 
		\begin{align}\label{fnsu}
		   \|u\|_{Z_T^1}\lesssim  \|u_0\|_{L^2\cap \dot W^{2\gamma,1}}+(1+c T^\gamma)M_2\lesssim (1+c T^\gamma)M_2.
		\end{align}
Then we obtain \eqref{kpnorm} as a result of \eqref{fnstheta} and \eqref{fnsu}.\\
		
		Then we prove the contraction property \eqref{commap}. For $(w_i,\vartheta_i)\in \mathcal{G}_{T,cM_2}$, denote $(u_i,\theta_i)=\mathcal{T}(w_i,\vartheta_i), i=1,2$, and 
		$$
		v_i(t)=v_0+\int_0^t \partial_xw_i(\tau)d\tau.
		$$
	For simplicity, we denote $\bar{f}=f_1-f_2$ for $f\in\{v,u,\theta,w,\vartheta\}$, and 
	$$
	\bar R=R(w_1,\vartheta_1)-R(w_2,\vartheta_2),\ \ \bar F=F(v_1,\vartheta_1)-F(v_2,\vartheta_2),\ \ \ \bar G=G(v_1,w_1,\theta_1)-G(v_2,w_2,\theta_2).
	$$
	We can write the equations
		\begin{equation*}
			\begin{aligned}
				&\partial_t\bar{u}-\frac{\mu}{\mathbf{c}}\bar{u}_{xx}=\partial_x\bar{G},\\
				&\partial_t\bar{\theta}-\frac{\kappa}{\mathbf{c}}\bar\theta_{xx}=\bar{R}+\partial_x\bar{F}.
			\end{aligned}
		\end{equation*}
		By the equations above we can write the formula of $\bar{u}$ and $\bar{\theta}$ as follows
		\begin{align*}
&	\bar{u}(t)=\int_0^t\mathbf{K}(t-s)\ast\bar{G}(s)ds,\\
&\bar{\theta}(t)=\int_0^t\mathbf{K}(t-s)\ast\bar{R}(s)ds+\int_0^t\mathbf{K}(t-s)\ast\bar{F}(s)ds.
		\end{align*}
		Following the proof of \eqref{fnsv1}, we have $\|v_1-1\|_{Y_T}+\|v_2-1\|_{Y_T}\lesssim (1+cT^\gamma)M_2$, and 
		\begin{equation*}
		\|\bar{v}\|_{{Y}_T}\lesssim T^{\gamma}\|\bar{w}\|_{{Z}_T^1}.
		\end{equation*}
	Thanks to Lemma \ref{lemma}, we have
		\begin{align}
			\|\bar \theta\|_{Z_T^2}&=\|\bar{\theta}\|_{L_{T,x}^2\cap X_T^{\frac{1}{2},2}\cap X_T^{\frac{1}{2}-\gamma,1} }+\|\partial_x\bar{\theta}\|_{L_{T,x}^{\frac{6}{5}}\cap X_T^{\frac{5}{6},\frac{6}{5}},X_T^{1-\gamma,1}}\nonumber\\
			&\lesssim T^{\frac{1}{4}}\|\bar{R}\|_{L^1_{T,x}\cap X_T^{1,1}}+\|\bar{F}\|_{L^\frac{6}{5}\cap X_T^{\frac{5}{6},\frac{6}{5}}\cap X_T^{1-\gamma,1}}.\label{fnsbtheta3}
		\end{align}
		\begin{equation}\label{fnsbu4}
		\|\bar u\|_{Z_T^1}=	\sum_{\star\in \{L_{T,x}^2,X_T^{\frac{1}{2},2},X_T^{\frac{3}{4},\infty},X_T^{\frac{1}{2}-\gamma,1},BV_T^{1-\gamma}\}}\|\partial_x\bar{u}\|_{\star}\leq\sum_{\star\in \{L_{T,x}^2,X_T^{\frac{1}{2},2},X_T^{\frac{3}{4},\infty},X_T^{\frac{1}{2}-\gamma,1},BV_T^{1-\gamma}\}}\|\bar{G}\|_{\star}.
		\end{equation}
With a slight abuse of notation, we drop some constants and  write the formula of $\bar{R}$, $\bar{F}$ and $\bar{G}$ as follows
		\begin{align*}
			&\bar{R}=\frac{\bar{\vartheta}}{v_1}w_{1x}+\frac{\vartheta_2-1}{v_1v_2}\bar{v}w_{1x}+\frac{\vartheta_2-1}{v_2}\bar{w}_x+\frac{\bar{v}}{v_1v_2}w_{1x}+\frac{\bar{w}_x}{v_2}+\frac{\bar{v}}{v_1v_2}w_{1x}^2+\frac{\bar{w}_x(w_{1x}+w_{2x})}{v_2},\\
			&\bar{F}=\frac{\bar{v}}{v_1v_2}\vartheta_{1x}+(\frac{1}{v_2}-1)\bar{\vartheta}_x,\\
			&\bar{G}=\frac{\bar{v}}{v_1v_2}(\theta_1-1)+\frac{1}{v_2}\bar{\theta}+\frac{\bar{v}}{v_1v_2}+\frac{\bar{v}}{v_1v_2}w_{1x}+(\frac{1}{v_2}-1)\bar{w}_x.
		\end{align*}
		
		So we can do as \eqref{fnsrem1}, \eqref{fnsthetaf1}, \eqref{fnsthetaf} and \eqref{fnsg} to get the following estimates, for any $\star\in\{L_{T,x}^2,X_T^{1,1}\}$,
		\begin{align*}
    			\|\bar{R}\|_{\star}\lesssim &\|\bar{\vartheta}\|_{Z_T^2}\|\frac{1}{v_1}\|_{Y_T}\|w_1\|_{Z_T^1}+\|\vartheta_2\|_{Z_T^2}\|\frac{\bar{v}}{v_1v_2}\|_{Y_T}\|w_1\|_{Z_T^1}+\|\vartheta\|_{Z_T^2}\|\frac{1}{v_2}\|_{Y_T}\|\bar{w}\|_{Z_T^1}+T^{\frac{1}{2}+\gamma}\|\frac{\bar{v}}{v_1v_2}\|_{Y_T}\|w_1\|_{Z_T^1}\\	&+T^{\frac{1}{2}+\gamma}\|\frac{1}{v_2}\|_{Y_T}\|\bar{w}_x\|_{Z_T^1}+\|\frac{\bar{v}}{v_1v_2}\|_{Y_T}\|w_1\|_{Z_T^1}^2+\|\bar{w}\|_{Z_T^1}\|\frac{1}{v_2}\|_{Y_T}(\|w_1\|_{Z_T^1}+\|w_2\|_{Z_T^1})\\
    			\lesssim &cM_2(1+T^{\frac{1}{2}+\gamma})(\|\bar{w}\|_{Z_T^1}+\|\bar{\vartheta}\|_{Z_T^2}).
    			\end{align*}
    			For any $\star\in\{L_{T,x}^{\frac{6}{5}},X_T^{\frac{5}{6},\frac
    			{6}{5}},X_T^{1-\gamma,1}\}$, by Lemma \ref{lemproduct},
    			\begin{align*}
			\|\bar{F}\|_{\star}\lesssim \|\frac{\bar{v}}{v_1v_2}\|_{Y_T}\|\vartheta\|_{\star}+\|\frac{v_2-1}{v_2}\|_{Y_T}\|\bar\vartheta_x\|_{\star}\lesssim M_2(1+T^{\gamma})(\|\bar{w}\|_{Z_T^2}+\|\bar{\vartheta}\|_{Z_T^2}).
			\end{align*}
			For any $\star\in\{L_{T,x}^2,X_T^{\frac{1}{2},2},X_T^{\frac{3}{4},\infty},X_T^{\frac{1}{2}-\gamma,1},BV_T^{1-\gamma}\}$, we have
			\begin{align*}
\|\bar G\|_{\star}&\lesssim \left\|\frac{\bar{v}}{v_1v_2}\right\|_{Y_T}(\|\theta_1-1\|_{\star}+\|w_{1x}\|_{\star})+\left\|\frac{1}{v_2}\right\|_{Y_T}\|\bar \theta\|_{\star}+\left\|\frac{1}{v_1v_2}\right\|_{Y_T}\|\bar v\|_{\star}+\left\|\frac{1}{v_2}-1\right\|_{Y_T}\|\bar w_x\|_\star\\
&\lesssim \|\bar \theta\|_{\star }+M_2(1+cT^\gamma)\|\bar w_x\|_{\star}.
		\end{align*}
		So we get from  \eqref{fnsbtheta3} and \eqref{fnsbu4} that
		\begin{equation}\label{fnsbtheta1}
			\|\bar \theta\|_{Z_T^2}\lesssim  cM_2(1+T^{\gamma})(\|\bar{w}\|_{Z_T^1}+\|\bar{\vartheta}\|_{Z_T^2}),	
			\end{equation}
		\begin{equation}\label{fnsbu1}
		\|\bar u\|_{Z_T^1}\lesssim  cM_2(1+T^{\gamma})(\|\bar{w}\|_{Z_T^1}+\|\bar{\vartheta}\|_{Z_T^2}).
		\end{equation}
		 So we combine \eqref{fnsbtheta1}, \eqref{fnsbu1}, and then we get \eqref{commap}. Following the proof of Theorem \ref{thmisen}, we apply the Banach fixed point theorem and use \eqref{kpnorm} together with \eqref{commap}, we take $\eps_1$ small enough, and $c_2>1$ such that for any $M_2<\eps_1$, $C_0c_2M_2(1+T^{\gamma})<\frac{1}{2}$, then we finish the proof of existence and uniqueness of solution in $\mathcal{G}_{T,c_2M_2}$. 
	\end{proof}\vspace{0.3cm}\\
	\begin{proof}[Proof of Theorem \ref{thm2}] 
	Let $(v,u,\theta)$ be the unique solution in Theorem \ref{thmfull}. By \eqref{thm4.1} we have 
		\begin{equation}\label{firsthalf}
			\begin{aligned}
		\|v(t)-1\|_{L^1\cap BV\cap L^\infty}+t^{\frac{1}{2}}\|u(t)\|_{ BV}+t^{\frac{3}{4}}\|u_x(t)\|_{L^\infty}+t^{1-\gamma}\|\theta(t)-1\|_{L^1\cap BV\cap L^\infty}(t) \lesssim M_2.
			\end{aligned}
		\end{equation}
Moreover, by the formula 
		\begin{equation*}
			u(t)=\mathbf{K}(t)\ast u_0+\int_0^t\mathbf{K}(t-s)\ast \partial_xG(s)ds.
		\end{equation*}
		It is easy to check that 
		\begin{equation}\label{fuluL1}
	\begin{aligned}
		  \sup_{t\in[0,T]}\|u(t)\|_{L^1}&\lesssim \|u_0\|_{L^1}+\int_0^t(t-\tau)^{-\frac{1}{2}}\|G(\tau)\|_{L^1}d\tau\lesssim M_2+T^\gamma \|G\|_{X^{\frac{1}{2}-\gamma,1}_T}\\
		   &\lesssim (1+T^\gamma)M_2\lesssim M_2,\\
		  \sup_{t\in[0,T]}t^\frac{1}{2}\|u(t)\|_{L^\infty}&\lesssim \sup_{t\in[0,T]}t^\frac{1}{2}\|\mathbf{K}(t)\|_{L^\infty}\|u_0\|_{L^1}+\sup_{t\in[0,T]}t^\frac{1}{2}\int_0^t\| G(\tau)\|_{L^\infty}d\tau\\
&\lesssim \|u_0\|_{L^1}+t^{\frac{1}{4}} \|G\|_{X_T^{\frac{3}{4},\infty}}\overset{\eqref{tilF}}\lesssim M_2.
		\end{aligned}
		\end{equation}
		For $\|\theta_x\|_{X_T^{\frac{3}{2}-\gamma,\infty}}$, we have the formula
		\begin{equation*}
		\theta_x(t)=\partial_xK(t)\ast (\theta_0-1)+\int_0^t\partial_xK(t-s)\ast R(s)ds+\int_0^t\partial_{xx}K(t-s)\ast F(s)ds:=\theta_L+\theta_R+\theta_N.
		\end{equation*}
		It is easy to see that 
		\begin{equation*}
		\|\theta_L\|_{X_T^{\frac{3}{2}-\gamma,\infty}}\lesssim \|\theta_0-1\|_{\dot W^{2\gamma-1,1}}.
		\end{equation*}
		By Lemma \ref{rem}, we obtain
		\begin{equation*}
		\|\theta_R\|_{X_T^{\frac{3}{2}-\gamma,\infty}}\lesssim T^{\frac{1}{2}-\gamma}(\|R\|_{L_{T,x}^1}+\|R\|_{X_T^{\frac{3}{2},\infty}}).
		\end{equation*}
		By Lemma \ref{mainlem}, we have
		\begin{equation*}
		\|\theta_N\|_{X_T^{\frac{3}{2}-\gamma,\infty}}\lesssim \|F\|_{X_T^{\frac{3}{2}-\gamma,\infty}}+\|F\|_{X_T^{1-\gamma,1}}. 
		\end{equation*}
		And we have 
		\begin{equation*}
		\|R\|_{X_T^{\frac{3}{2}-\gamma,\infty}}\lesssim (1+\|v-1\|_{Y_T})(\|u\|_{Z_T^1}+\|\theta-1\|_{Z_T^2})\|u\|_{Z_T^1}\lesssim M_1^2(1+T^\gamma M_1)
		\end{equation*}
		and 
		\begin{equation*}
		\|F\|_{X_T^{\frac{3}{2}-\gamma,\infty}}\lesssim \|v-1\|_{Y_T}(1+\|v-1\|_{Y_T})\|\theta_x\|_{X_T^{\frac{3}{2}-\gamma,\infty}}\lesssim M_1 \|\theta_x\|_{X_T^{\frac{3}{2}-\gamma,\infty}}.
		\end{equation*}
		Combining the estimates above with \eqref{fnsrem1} and \eqref{fnsthetaf} to obtain that
		\begin{equation*}
		\|\theta_x\|_{X_T^{\frac{3}{2}-\gamma,\infty}}\leq C(\|\theta_0-1\|_{\dot W^{2\gamma-1,1}}+T^{\frac{1}{2}-\gamma}M_1+M_1\|\theta_x\|_{X_T^{\frac{3}{2}-\gamma,\infty}}).
		\end{equation*}
		So we can choose $\varepsilon_1$ small such that $CM_1\leq C\varepsilon_1\leq\frac{1}{2}$, then we have 
		\begin{equation}\label{highord}
		\|\theta_x\|_{X_T^{\frac{3}{2}-\gamma,\infty}}\leq 4CM_1.
		\end{equation}
Combining \eqref{firsthalf}, \eqref{fuluL1} and \eqref{highord} to obtain that for $U(t)=(v(t)-1,u(t),\theta(t)-1)$,
\begin{align}\label{es0T}
      &	\|U(t)\|_{L^1\cap BV}+\sqrt{t+1}\|U(t)\|_{L^\infty}+\sqrt{t}\|(u_x(t),\theta_x(t))\|_{L^\infty}\lesssim \frac{1}{t}M_2,\ \ \ \  \forall t\in[0,T].
\end{align}
		We can further take $\eps_1$ small enough such that
		\begin{equation*}
		\|v(T/2)-1\|_{L^1\cap BV}+\|u(T/2)\|_{L^1\cap BV}+\|\theta(T/2)-1\|_{L^1\cap BV}\lesssim T^{\gamma-1}M_2\leq T^{\gamma-1}\eps_1\ll 1.
		\end{equation*}
	By \cite[Theorem 6.1]{Wang}, there exists a unique global solution $(\tilde{v},\tilde{u},\tilde{\theta})(t)$ for Cauthy problem with initial data $(v,u,\theta)(\frac{T}{2})$ with 
		\begin{equation}\label{continuation}
		\|(\tilde{v}-1,\tilde{u},\tilde{\theta}-1)(t)\|_{L^1\cap BV}+\sqrt{t+1}\|(\tilde{v}-1,\tilde{u},\tilde{\theta}-1)(t)\|_{L^\infty}+\sqrt{t}\|(\tilde{u}_x,\tilde\theta_x)(t)\|_{L^\infty}\lesssim M_2.
		\end{equation}
		And by uniqueness, the $(\tilde{v},\tilde{u},\tilde{\theta})$ is the extension of $(v,u,\theta)$ on $(\frac{T}{2},\infty)$. We obtain \eqref{main2} by \eqref{es0T} and  \eqref{continuation}, and the proof is done. 
	\end{proof}\\

	\textbf{Acknowledgements:} This research is funded by Vietnam National University Ho Chi Minh City (VNU-HCM) under grant number T2022-18-01.
	Q.H.N.  is supported by the Academy of Mathematics and Systems Science, Chinese Academy of Sciences startup fund, and the National Natural Science Foundation of China (No. 12050410257 and No. 12288201) and  the National Key R$\&$D Program of China under grant 2021YFA1000800. 

\end{document}